\newcommand{\klockan}{\the\hours:{\ifnum\minutes<10 0\fi}\the\minutes}
\newcommand{\tid}{\today\ \klockan}
\newcommand{\prtid}{\smash{\raise 10mm \hbox{\LaTeX ed \tid}}}
\renewcommand{\prtid}{}
\def\sectionmark#1{} 
\def\subsectionmark#1{}
\newcommand{\sectnr}{\ifnum \c@secnumdepth >\z@
                 \thesection.\hskip 1em\relax \fi}
\def\@evenhead{\footnotesize\rm\thepage\hfil\leftmark\hfil\llap{\prtid}}
\def\@oddhead{\footnotesize\rm\rlap{\prtid}\hfil\rightmark\hfil\thepage}
\def\tableofcontents{\section*{Contents} 
 \@starttoc{toc}}
\def\@biblabel#1{#1.}
\let\Thebibliography=\thebibliography
\renewcommand{\thebibliography}[1]{\def\@mkboth##1##2{}\Thebibliography{#1}
\addcontentsline{toc}{section}{References}
\frenchspacing 
\setlength{\@topsep}{0pt}
\setlength{\itemsep}{0pt}%
\setlength{\parskip}{0pt plus 2pt}%
}
\def\mdots@{\mathinner.\nonscript\!.%
 \ifx\next,.\else\ifx\next;.\else\ifx\next..\else
 \nonscript\!\mathinner.\fi\fi\fi}
\let\ldots\mdots@
\let\cdots\mdots@
\let\Enumerate=\enumerate
\renewcommand{\enumerate}{\Enumerate%
\setlength{\@topsep}{0pt}
\setlength{\itemsep}{0pt}%
\setlength{\parskip}{0pt plus 1pt}%
\renewcommand{\theenumi}{\textup{(\alph{enumi})}}%
\renewcommand{\labelenumi}{\theenumi}%
}
\let\endEnumerate=\endenumerate
\renewcommand{\endenumerate}{\endEnumerate\unskip}
\def\@seccntformat#1{\csname the#1\endcsname.\quad}
\newcommand{\authortitle}[2]{\author{#1}\title{#2}\markboth{#1}{#2}}
\newcommand{\art}[6]{{\sc #1, \rm #2, \it #3\/ \bf #4 \rm (#5), \mbox{#6}.}}
\newcommand{\artnopt}[6]{{\sc #1, \rm #2, \it #3\/ \bf #4 \rm (#5), \mbox{#6}}}
\newcommand{\auth}[2]{{#1, #2.}}
\newcommand{\artprep}[3]{{\sc #1, \rm #2, #3.}}
\newcommand{\artin}[3]{{\sc #1, \rm #2, in #3.}}
\newcommand{\book}[3]{{\sc #1, \it #2, \rm #3.}}
\newcommand{\private}[2]{\sc #1, \emph{Private communication}, #2.}
\newcommand{\AND}{{\rm and }}
\newtheoremstyle{descriptive}%
  {\topsep}   
  {\topsep}   
  {\rmfamily} 
  {}          
  {\bfseries} 
  {.}         
  { }         
  {}          
\newtheoremstyle{propositional}%
  {\topsep}   
  {\topsep}   
  {\itshape}  
  {}          
  {\bfseries} 
  {.}         
  { }         
  {}          
\newtheoremstyle{remarkstyle}%
  {\topsep}   
  {\topsep}   
  {\rmfamily}  
  {}          
  {\itshape} 
  {.}         
  { }         
  {}          
\theoremstyle{propositional}
\newtheorem{thm}{Theorem}[section]
\newtheorem{prop}[thm]{Proposition}
\newtheorem{lem}[thm]{Lemma}
\newtheorem{cor}[thm]{Corollary}
\theoremstyle{descriptive}
\newtheorem{deff}[thm]{Definition}
\newtheorem{example}[thm]{Example}
\newtheorem{remark}[thm]{Remark}
\renewenvironment{proof}[1][\proofname]{\par
  \pushQED{\qed}%
  \normalfont 
  \trivlist
  \item[\hskip\labelsep
        \itshape
    #1\@addpunct{.}]\ignorespaces
}{%
  \popQED\endtrivlist\@endpefalse
}
\newcommand{\setm}{\setminus}
\renewcommand{\emptyset}{\varnothing}
\def\vint{\mathop{\mathchoice%
          {\setbox0\hbox{$\displaystyle\intop$}\kern 0.22\wd0%
           \vcenter{\hrule width 0.6\wd0}\kern -0.82\wd0}%
          {\setbox0\hbox{$\textstyle\intop$}\kern 0.2\wd0%
           \vcenter{\hrule width 0.6\wd0}\kern -0.8\wd0}%
          {\setbox0\hbox{$\scriptstyle\intop$}\kern 0.2\wd0%
           \vcenter{\hrule width 0.6\wd0}\kern -0.8\wd0}%
          {\setbox0\hbox{$\scriptscriptstyle\intop$}\kern 0.2\wd0%
           \vcenter{\hrule width 0.6\wd0}\kern -0.8\wd0}}%
          \mathopen{}\int}
\newcommand{\Cp}{{C_p}}
\DeclareMathOperator{\diam}{diam}
\DeclareMathOperator{\capp}{cap}
\newcommand{\cp}{\capp_p}
\newcommand{\cpmu}{\capp_{p,\mu}}
\DeclareMathOperator{\dist}{dist}
\DeclareMathOperator{\inter}{int}
\DeclareMathOperator{\Lip}{Lip}
\DeclareMathOperator{\lip}{lip}
\DeclareMathOperator{\interior}{int}
\DeclareMathOperator*{\essinf}{ess\,inf}
\newcommand{\bdry}{\partial}
\newcommand{\bdy}{\bdry}
\newcommand{\loc}{_{\rm loc}}
{\catcode`p =12 \catcode`t =12 \gdef\eeaa#1pt{#1}}      
\def\accentadjtext#1{\setbox0\hbox{$#1$}\kern   
                \expandafter\eeaa\the\fontdimen1\textfont1 \ht0 }
\def\accentadjscript#1{\setbox0\hbox{$#1$}\kern 
                \expandafter\eeaa\the\fontdimen1\scriptfont1 \ht0 }
\def\accentadjscriptscript#1{\setbox0\hbox{$#1$}\kern   
                \expandafter\eeaa\the\fontdimen1\scriptscriptfont1 \ht0 }
\def\accentadjtextback#1{\setbox0\hbox{$#1$}\kern       
                -\expandafter\eeaa\the\fontdimen1\textfont1 \ht0 }
\def\accentadjscriptback#1{\setbox0\hbox{$#1$}\kern     
                -\expandafter\eeaa\the\fontdimen1\scriptfont1 \ht0 }
\def\accentadjscriptscriptback#1{\setbox0\hbox{$#1$}\kern 
                -\expandafter\eeaa\the\fontdimen1\scriptscriptfont1 \ht0 }
\def\itoverline#1{{\mathsurround0pt\mathchoice
        {\rlap{$\accentadjtext{\displaystyle #1}
                \accentadjtext{\vrule height1.593pt}
                \overline{\phantom{\displaystyle #1}
                \accentadjtextback{\displaystyle #1}}$}{#1}}
        {\rlap{$\accentadjtext{\textstyle #1}
                \accentadjtext{\vrule height1.593pt}
                \overline{\phantom{\textstyle #1}
                \accentadjtextback{\textstyle #1}}$}{#1}}
        {\rlap{$\accentadjscript{\scriptstyle #1}
                \accentadjscript{\vrule height1.593pt}
                \overline{\phantom{\scriptstyle #1}
                \accentadjscriptback{\scriptstyle #1}}$}{#1}}
        {\rlap{$\accentadjscriptscript{\scriptscriptstyle #1}
                \accentadjscriptscript{\vrule height1.593pt}
                \overline{\phantom{\scriptscriptstyle #1}
                \accentadjscriptscriptback{\scriptscriptstyle #1}}$}{#1}}}}
\def\cprime{{\mathsurround0pt$'$}}
\newcommand{\ga}{\gamma}
\newcommand{\Ga}{\Gamma}
\newcommand{\dmu}{d\mu}
\newcommand{\de}{\delta}
\newcommand{\eps}{\varepsilon}
\newcommand{\la}{\lambda}
\newcommand{\Om}{\Omega}
\newcommand{\clAp}{{\itoverline{A}\mspace{1mu}}^p}
\newcommand{\clGp}{{\itoverline{G}\mspace{1mu}}^p}
\newcommand{\p}{{$p\mspace{1mu}$}}   
\newcommand{\R}{\mathbf{R}}
\newcommand{\eR}{{\overline{\R}}}
\newcommand{\Gt}{\widetilde{G}}
\newcommand{\limplus}{{\mathchoice{\raise.17ex\hbox{$\scriptstyle +$}}
                {\raise.17ex\hbox{$\scriptstyle +$}}
                {\raise.1ex\hbox{$\scriptscriptstyle +$}}
                {\scriptscriptstyle +}}}
\newcommand{\Np}{N^{1,p}}
\newcommand{\Nploc}{N^{1,p}\loc}
\newcommand{\Lploc}{L^{p}\loc}
\newcommand{\Ct}{\widetilde{C}}
\newcommand{\HP}{H^{1,p}}
\newcommand{\cpvar}{\widetilde{{\rm cap}}_p}
\newcommand{\setcurrentlabel}[1]{\def\@currentlabel{#1}}
\newcounter{saveenumi}
\numberwithin{equation}{section}
\newenvironment{ack}{\medskip{\it Acknowledgement.}}{}
\begin{document}

\authortitle{Anders Bj\"orn and Jana Bj\"orn}
{The variational capacity with respect to nonopen sets in metric spaces}
\author{
Anders Bj\"orn \\
\it\small Department of Mathematics, Link\"opings universitet, \\
\it\small SE-581 83 Link\"oping, Sweden\/{\rm ;}
\it \small anders.bjorn@liu.se
\\
\\
Jana Bj\"orn \\
\it\small Department of Mathematics, Link\"opings universitet, \\
\it\small SE-581 83 Link\"oping, Sweden\/{\rm ;}
\it \small jana.bjorn@liu.se
}

\date{}
\maketitle

\noindent{\small
{\bf Abstract}. 
We pursue a systematic treatment of the variational capacity 
on metric spaces and give
full proofs of its basic properties. 
A novelty is that we study it with respect to nonopen sets,
which is important for 
Dirichlet and obstacle problems on nonopen sets, with applications
in fine potential theory.
Under standard assumptions on the underlying metric space, we show that 
the variational capacity is
a Choquet capacity and we provide several equivalent definitions for it.
On open sets in weighted $\R^n$ it is shown to coincide with the
usual variational capacity considered in the literature.

Since some desirable properties fail on general nonopen sets, we
introduce a related capacity which turns out to be a Choquet 
capacity in general metric spaces and for many sets coincides
with the variational capacity.
We provide examples demonstrating various properties of both capacities and 
counterexamples for when they fail.
Finally, we discuss how a change of the underlying metric space influences 
the variational capacity and its minimizing functions.
} 

\bigskip
\noindent
{\small \emph{Key words and phrases}: 
Choquet capacity,
doubling measure, 
metric space,  Newtonian space, nonlinear, outer capacity,
\p-harmonic, Poincar\'e inequality, potential theory,
quasicontinuous, Sobolev space,
upper gradient, variational capacity.
}

\medskip
\noindent
{\small Mathematics Subject Classification (2010): 
Primary: 31E05;
Secondary: 31C40, 31C45, 
35J20, 35J25, 35J60, 
49J40, 49J52, 49Q20, 
58J05, 58J32.
}

\section{Introduction}

The variational capacity $\cp(A,\Om)$ has been used extensively 
in nonlinear potential theory on $\R^n$, e.g.\ in the monographs
Heinonen--Kilpel\"ainen--Martio~\cite{HeKiMa}
and Mal\'y--Ziemer~\cite{MaZi}. 
Roughly speaking it is the energy of the \p-harmonic function in $\Om\setm A$
with zero boundary values on $\bdry\Om$ and boundary values~1 on $A$,
but its exact definition in $\R^n$ is usually done in three steps:  
through a minimization problem for compact $A$ and then by inner and outer
regularity for open and arbitrary sets $A$.
Also the choice of admissible functions in the minimization problem
varies in the literature.

The variational capacity is closely related to capacitary potentials 
and thus naturally appears in the Wiener criterion for boundary regularity
of \p-harmonic functions (Maz\cprime ya~\cite{Maz70}, 
Lindqvist--Martio~\cite{LiMa}, Kilpel\"ainen--Mal\'y~\cite{KilMa94}),
even though in unweighted $\R^n$ with $p<n$ it can be equivalently
replaced by the Sobolev capacity.
Through the Wiener integral it also plays an important role in the definition
of thinness and in nonlinear fine potential theory (\cite{HeKiMa}, \cite{MaZi}).

In nonlinear potential theory on metric spaces, 
the variational capacity $\cp$
(Definition~\ref{deff-varcap}) has been used in different contexts in e.g.\ 
\cite{ABclass}, \cite{BBnonopen}, \cite{BBMP}, \cite{BBS},
 \cite{BjIll}--\cite{BMS},
\cite{Fa}, \cite{KiMa03}, \cite{korte08}, \cite{KoMaSh}
and \cite{martioRep09}.
Very few of even the basic properties of $\cp$ have been given full proofs 
in the metric space literature, 
even though they must be known to experts in the field.

In this paper we pursue a systematic treatment of the variational capacity 
on metric spaces and give
full proofs of its basic properties. 
(Recently some of these results were for open $E$
included in the monograph Bj\"orn--Bj\"orn~\cite{BBbook}.)
A novelty here is that we consider $\cp(A,E)$ when $E$ is not open, which
does not seem to have been considered earlier.
This is motivated by the study of obstacle and Dirichlet problems
on nonopen sets in 
Bj\"orn--Bj\"orn~\cite{BBnonopen}, where some of the results
in this paper are used.
Another motivation is the use of $\cp$ in the development of fine potential
theory on metric spaces which has been touched upon in~\cite{BBnonopen}
and which we want to pursue in forthcoming papers.

Let $X$ be a metric space equipped with 
a Borel measure $\mu$.
After presenting some background results in Section~\ref{sect-prelim},
we define the variational capacity $\cp(A,E)$
in Section~\ref{sect-cp} and establish
its basic properties holding in full generality, such as
countable and strong subadditivity.
The definition of $\cp$ on metric spaces is more straightforward than 
in $\R^n$, due to the use of Newtonian spaces which are the 
natural (and in some aspects better) substitutes for Sobolev spaces in the 
setting of metric spaces.
This straightforward approach is essential when studying 
the variational capacity $\cp(A,E)$
with respect to nonopen $E$.
This on the other hand means that the outer regularity 
\begin{equation} \label{eq-outer-reg}
\cp(A,E)=\inf_{\substack{G \text{ open} \\  A\subset G \subset E}} \cp(G,E)
\end{equation}
on metric spaces is not a direct consequence of the definition
and can be proved only under additional assumptions.
In particular, in Theorem~\ref{thm-outercap-cp}
we show that $\cp$, $p>1$, is an outer capacity
for sets $A \subset \interior E$, provided that all Newtonian functions
are quasicontinuous (which holds e.g.\ under the standard assumptions
that $X$ is complete and the measure
$\mu$ is doubling and supports a $(1,p)$-Poincar\'e inequality).
As a consequence of this result we obtain the equality between the
capacity of a set and its fine closure.
Together with other properties, which we prove here, the outer regularity 
implies that $\cp(\,\cdot\,,E)$ is a Choquet capacity and all Borel sets
are capacitable, see Theorem~\ref{thm-Choq-cap}. 

This and the above outer regularity result are then used 
in Section~\ref{sect-eq-Rn} to show that
our variational capacity
coincides with the variational capacity on (weighted and unweighted) 
$\R^n$ considered in Heinonen--Kilpel\"ainen--Martio~\cite{HeKiMa}.
Since these capacities are defined in different ways 
and using different admissible functions, 
their equality is not straightforward but relies on the above mentioned
Theorems~\ref{thm-outercap-cp} and~\ref{thm-Choq-cap}. 
Another ingredient is that the Newtonian spaces on weighted $\R^n$
coincide with the usual
weighted Sobolev spaces considered in \cite{HeKiMa}, with equal norms.
The proof of this relies on a deep result of Cheeger~\cite{Cheeg}
and is given in Bj\"orn--Bj\"orn~\cite{BBbook}.

If $E$ is open and the metric space $X$ satisfies the above standard
assumptions, then Theorems~\ref{thm-outercap-cp} and~\ref{thm-Choq-cap} 
imply that $\cp$ on $X$ can be
defined without Newtonian spaces using only elementary properties
of Lipschitz functions in a similar way as in $\R^n$, see 
Section~\ref{sect-alt-def}.
This is however not possible for nonopen $E$ and not known in more
general spaces.
We also give examples showing that the outer regularity~\eqref{eq-outer-reg}
of $\cp$ is not true for arbitrary $A \subset E$.
This suggests the following definition of a related capacity:
\begin{equation}  \label{eq-def-cpvar}
   \cpvar(A,E)=\inf_{\substack{G \text{ relatively open} \\  A\subset G \subset E}} \cp(G,E),
\end{equation}
which is obviously outer for all sets $A \subset E$ and turns out to be
a Choquet capacity for many $A\subset E$ 
without any additional 
assumptions on $X$, see Theorem~\ref{thm-Choquet-cpvar}.
If $E$ is locally compact then this holds for all $A\subset E$.
Thus, $\cpvar$ seems to be a ``better'' modification of $\cp$, and
in Proposition~\ref{prop-cp=cpvar} we show that under certain
assumptions, $\cpvar(A,E)=\cp(A,E)$ or $\cpvar(A,E)=\infty$.
At the same time, 
some natural basic properties can fail for $\cpvar$, 
see Section~\ref{sect-alt-def}.
Moreover, in connection with e.g.\ Adams' criterion
in Bj\"orn--Bj\"orn~\cite{BBnonopen}, it is our capacity $\cp$ 
which is needed and its role cannot be played by any other nonequivalent 
capacity.

We finish the paper with a short discussion on how changes of 
the underlying space $X$ influence $\cp$, see Example~\ref{ex-vary-X}.

\begin{ack}
The  authors were supported by the Swedish Research Council
and belong to the
European Science
Foundation Networking Programme \emph{Harmonic and Complex Analysis and
Applications}
and to the Scandinavian Research Network \emph{Analysis and Application}.
\end{ack}

\section{Notation and preliminaries}
\label{sect-prelim}

We assume throughout the paper that $X=(X,d,\mu)$ is a 
metric space equipped
with a metric $d$ and a  measure $\mu$ such that
\[
     0 <  \mu(B)< \infty
\]
for all balls 
$B=B(x_0,r):=\{x\in X: d(x,x_0)<r\}$ in~$X$
(we make the convention that balls are nonempty and open).
We also assume that $1 \le p<\infty$ and that 
$E \subset X$ is a bounded  set.

The $\sigma$-algebra on which $\mu$ is defined
is obtained by completion of the Borel $\sigma$-algebra.
The measure $\mu$ is \emph{doubling} if there exists
a constant $C>0$ such that 
\begin{equation*}
        0 < \mu(2B) \le C \mu(B) < \infty,
\end{equation*}
where $\lambda B=B(x_0,\lambda r)$.  
Note that if $\mu$ is doubling then 
$X$ is complete if and only if $X$ is proper,
i.e.\ all bounded closed sets are compact.

A \emph{curve} is a continuous mapping from an interval.
We will only consider curves which are nonconstant, compact and rectifiable.
A curve can thus be parameterized by its arc length $ds$. 
We follow Heinonen and Koskela~\cite{HeKo98} in introducing
upper gradients as follows (they called them
very weak gradients).

\begin{deff} \label{deff-ug}
A nonnegative Borel function $g$ on $X$ is an \emph{upper gradient} 
of an extended real-valued function $f$
on $X$ if for all (nonconstant, compact and rectifiable) curves  
$\gamma: [0,l_{\gamma}] \to X$,
\begin{equation} \label{ug-cond}
        |f(\gamma(0)) - f(\gamma(l_{\gamma}))| \le \int_{\gamma} g\,ds,
\end{equation}
where we make the convention that the left-hand side is $\infty$ 
whenever both terms therein are infinite.
If $g$ is a nonnegative measurable function on $X$
and if (\ref{ug-cond}) holds for \p-almost every curve (see below), 
then $g$ is a \emph{\p-weak upper gradient} of~$f$. 
\end{deff}

Here and in what follows, we say that a property holds for 
\emph{\p-almost every curve}
if it fails only for a curve family $\Ga$ with zero \p-modulus, 
i.e.\ there exists $0\le\rho\in L^p(X)$ such that 
$\int_\ga \rho\,ds=\infty$ for every curve $\ga\in\Ga$.
Note that a \p-weak upper gradient \emph{need not\/} be a Borel function,
only measurable.
It is implicitly assumed that $\int_{\gamma} g\,ds$ is
defined (with a value in $[0,\infty]$) for
\p-almost every  curve $\ga$,
although this is in fact a consequence of the measurability.
For proofs of these and all other facts 
in this section 
we refer to Bj\"orn--Bj\"orn~\cite{BBbook}. 
(Some of the references we mention below may not 
provide a proof in the generality considered here, but 
such proofs are given in \cite{BBbook}.)

The \p-weak upper gradients were introduced in
Koskela--MacManus~\cite{KoMc}.
They also showed that if $g \in \Lploc(X)$ is a \p-weak upper gradient of $f$,
then one can find a sequence $\{g_j\}_{j=1}^\infty$
of upper gradients of $f$ such that $g_j-g \to 0$ in $L^p(X)$.
If $f$ has an upper gradient in $\Lploc(X)$, then
it has a \emph{minimal \p-weak upper gradient} $g_f \in \Lploc(X)$
in the sense
 that for every \p-weak upper gradient $g \in \Lploc(X)$ of $f$ we have
$g_f \le g$ a.e.,
see Shan\-mu\-ga\-lin\-gam~\cite{Sh-harm}
and Haj\l asz~\cite{Haj03}.
The minimal \p-weak upper gradient is well defined
up to an equivalence class in the cone of nonnegative functions in $\Lploc(X)$.
Following Shanmugalingam~\cite{Sh-rev}, 
we define a version of Sobolev spaces on the metric space $X$.

\begin{deff}
The \emph{Newtonian space} on $X$ is 
\[
        \Np (X) = \{u: \|u\|_{\Np(X)} <\infty \},
\]
where
\[
        \|u\|_{\Np(X)} = \biggl( \int_X |u|^p \, \dmu 
                + \int_X g_u^p \, \dmu \biggr)^{1/p}
\]
for an everywhere defined measurable function $u:X\to\overline{\R}$ 
having an upper gradient in $\Lploc(X)$.
\end{deff}

The space $\Np(X)/{\sim}$, where  $u \sim v$ if and only if $\|u-v\|_{\Np(X)}=0$,
is a Banach space
and a lattice, see Shan\-mu\-ga\-lin\-gam~\cite{Sh-rev}.
For a measurable set $E \subset X$, the space $\Np(E)$ 
is defined
by considering $E$ as a metric space on its own.
Let us here point out that we assume that
functions in Newtonian spaces are defined everywhere,
and not just up to equivalence classes in $L^p$.

If $u, v \in \Nploc(X)$, then their minimal \p-weak upper gradients coincide
a.e.\ in the set $\{x \in X : u(x)=v(x)\}$,
in particular $g_{\min\{u,c\}}=g_u \chi_{\{u < c\}}$ a.e.\  for $c \in \R$.
Moreover, $g_{uv} \le |u|g_v + |v|g_u$. 

\begin{deff}
The
\emph{Sobolev capacity} of a set $A \subset X$ 
is the number 
\begin{equation*} 
  \Cp (A) =\inf    \|u\|_{\Np(X)}^p,
\end{equation*}
where the infimum is taken over all $u\in \Np (X) $ such that
$u=1$ on $A$.

We say that a property 
holds \emph{quasieverywhere} (q.e.)\ 
if the set of points  for which it fails
has capacity zero.
\end{deff}

The Sobolev capacity was introduced and used for Newtonian spaces
in Shanmugalingam~\cite{Sh-rev}.
It is countably subadditive and the correct gauge 
for distinguishing between two Newtonian functions. 
If $u \in \Np(X)$ and $v:X \to \eR$,
then $u \sim v$ if and only if $u=v$ q.e.
Moreover, 
if $u,v \in \Np(X)$ and
$u= v$ a.e., then $u=v$ q.e.
The proofs of properties for $\Cp$ are similar or easier than the proofs
of the corresponding properties for the variational capacity $\cp$ 
presented in this paper.
Note also that if $\Cp(E)=0$, then \p-almost every curve in $X$ avoids $E$,
by e.g.\ Lemma~3.6 in Shanmugalingam~\cite{Sh-rev}
or Proposition~1.48 in Bj\"orn--Bj\"orn~\cite{BBbook}.

To be able to compare the boundary values of Newtonian functions
we need a Newtonian space with zero boundary values.
We let
\[
\Np_0(E)=\{f|_{E} : f \in \Np(X) \text{ and }
        f=0 \text{ on } X \setm E\}.
\]
One can replace the assumption ``$f=0$ on $X \setm E$''
with ``$f=0$ q.e.\ on $X \setm E$''
without changing the obtained space $\Np_0(E)$.
Functions from $\Np_0(E)$ can be extended by zero q.e.\ in $X\setm E$ and we
will regard them in that sense if needed.
Note that if $\Cp(X \setminus E) = 0$, then $\Np_0(E) = \Np(E) = \Np(X)$,
since \p-almost every curve in $X$ avoids $X\setm E$.

The following Poincar\'e inequality is often assumed in the literature.
Because of the dilation $\la$ in the right-hand side, it is sometimes called
weak Poincar\'e inequality.

\begin{deff} \label{def-PI}
We say that $X$ supports a \emph{$(q,p)$-Poincar\'e inequality},
$q \ge 1$, if
there exist constants $C>0$ and $\lambda \ge 1$
such that for all balls $B \subset X$
and all integrable $u\in\Np(X)$,
\begin{equation} \label{PI-ineq}
        \biggl(\vint_{B} |u-u_B|^q \,\dmu\biggl)^{1/q}
        \le C (\diam B) \biggl( \vint_{\lambda B} g_u^{p} \,\dmu \biggr)^{1/p},
\end{equation}
where $ u_B 
 :=\vint_B u \,\dmu 
:= \int_B u\, d\mu/\mu(B)$.
\end{deff}

Using the above-mentioned results on \p-weak upper gradients from
Koskela--MacManus~\cite{KoMc},
it is easy to see that \eqref{PI-ineq} can equivalently be required for 
all upper gradients $g$ of $u$.
If $X$ supports a $(1,p)$-Poincar\'e inequality and $\mu$ is doubling,
then by Theorem~5.1 in Haj\l asz--Koskela~\cite{HaKo},
it supports a $(q,p)$-Poincar\'e inequality for some $q>p$,
and in particular a $(p,p)$-Poincar\'e inequality.
If $X$ is moreover complete 
then Lipschitz functions
are dense in $\Np(X)$, see Shan\-mu\-ga\-lin\-gam~\cite{Sh-rev}, 
and functions in $\Np(X)$
as well as in $\Np(\Om)$ are quasicontinuous 
(see Theorem~\ref{thm-quasicont} below).
It also follows
that $\Np_0(\Om)$ for open $\Om$
can equivalently be defined as the closure of
Lipschitz functions with compact support in $\Om$,
see Shanmugalingam~\cite{Sh-harm} or Theorem~5.45 in 
Bj\"orn--Bj\"orn~\cite{BBbook}.
For a general set $E$ this is not always possible
and our definition of $\Np_0(E)$ seems to be the natural one.

Moreover, if $X$ is unweighted $\R^n$ and $u \in \Np(X)$, then
$g_u=|\nabla u|$ a.e., where $\nabla u$ is the distributional gradient
of $u$.
This means that in the Euclidean setting, $\Np(\Om)$ for open $\Om \subset \R^n$
 is the 
refined Sobolev space as defined on p.\ 96 of
Heinonen--Kilpel\"ainen--Martio~\cite{HeKiMa}.
See Haj\l asz~\cite{Haj03} or 
Appendix~A.1 in \cite{BBbook} for a full
proof of this fact for unweighted $\R^n$, and
Appendix~A.2 in \cite{BBbook} for a proof for weighted $\R^n$
(requiring $p>1$).
See also Theorem~\ref{thm-equiv-HeKiMa} 
and its proof for further details.

A function $u :  X \to \eR$ is \emph{quasicontinuous}
if for every $\eps>0$ there is an \emph{open} set $G$
with $\Cp(G)<\eps$ such that $u|_{X \setm G}$ is real-valued continuous.

\begin{thm} \label{thm-quasicont}
\textup{(Bj\"orn--Bj\"orn--Shan\-mu\-ga\-lin\-gam~\cite{BBS5})}
Let $X$ be proper,  $\Om \subset X$ be open, and assume that 
continuous functions are dense in $\Np(X)$
\textup{(}which in particular holds if 
$X$ is complete and supports a\/ $(1,p)$-Poincar\'e inequality,
and  $\mu$ is doubling\/\textup{)}.
Then every $u\in\Np(\Om)$ is quasicontinuous in\/ $\Om$.
\end{thm}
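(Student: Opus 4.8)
The plan is to deduce the statement for $\Om$ from the special case $\Om=X$, and to treat the latter by approximating $u$ by continuous functions and combining this with a weak-type estimate for the Sobolev capacity.

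\emph{Step 1: reduction to $\Om=X$.} Since $X$ is proper and $\Om$ is open, I would first exhaust $\Om$ by open sets $\Om_1\subset\Om_2\subset\dotsb$ with $\overline{\Om_k}$ compact, $\overline{\Om_k}\subset\Om_{k+1}$ and $\bigcup_k\Om_k=\Om$ (for instance $\Om_k=B(x_0,k)\cap\{x\in X:\dist(x,X\setm\Om)>1/k\}$), and pick Lipschitz cut-offs $\eta_k$ with $0\le\eta_k\le1$, $\eta_k=1$ on $\Om_k$ and $\operatorname{supp}\eta_k$ a compact subset of $\Om$. Then $\eta_k u\in\Np(\Om)$ by the product rule $g_{\eta_k u}\le\eta_k g_u+|u|g_{\eta_k}$, and since $\eta_k u$ vanishes outside a compact subset of $\Om$ it extends by zero to a function in $\Np(X)$ (the standard gluing of Newtonian functions, see \cite{BBbook}). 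If each such $\eta_k u$ is quasicontinuous on $X$, then, given $\eps>0$, I choose open $W_k\subset X$ with $\Cp(W_k)<\eps2^{-k}$ such that $(\eta_k u)|_{X\setm W_k}$ is real-valued continuous and set $G=\bigcup_k(W_k\cap\Om)$; then $\Cp(G)<\eps$, $G$ is relatively open in $\Om$, and since $\eta_k u=u$ on the open set $\Om_k$ and $\bigcup_k\Om_k=\Om$, the restriction $u|_{\Om\setm G}$ is continuous at every point. Hence it suffices to prove that every $u\in\Np(X)$ is quasicontinuous.

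\emph{Step 2: continuous approximation and the weak estimate.} Given $u\in\Np(X)$, the hypothesis provides continuous $u_j\to u$ in $\Np(X)$. The basic tool is the elementary bound
\[
\Cp(\{|w|>\la\})\le\la^{-p}\|w\|_{\Np(X)}^p\qquad(w\in\Np(X),\ \la>0),
\]
which follows on testing the capacity with $\min\{|w|/\la,1\}$. Passing to a subsequence I may assume $\|u_{j+1}-u_j\|_{\Np(X)}$ and $\|u_j-u\|_{\Np(X)}$ so small that the \emph{open} sets $U_j:=\{|u_{j+1}-u_j|>2^{-j}\}$ satisfy $\sum_j\Cp(U_j)<\infty$ and that $\sum_j\Cp(\{|u_j-u|>2^{-j}\})<\infty$. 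Put $G_k:=\bigcup_{j\ge k}U_j$; this is open with $\Cp(G_k)\le\sum_{j\ge k}\Cp(U_j)\to0$ as $k\to\infty$. On $X\setm G_k$ one has $|u_{j+1}-u_j|\le2^{-j}$ for all $j\ge k$, so $\{u_j\}$ converges uniformly there, to a function $v$ which is therefore continuous on $X\setm G_k$. On the other hand $\sum_j\Cp(\{|u_j-u|>2^{-j}\})<\infty$ forces $u_j\to u$ q.e., and hence $v=u$ q.e.\ on $X\setm G_k$.

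\emph{Step 3: removing the exceptional set.} Fix $\eps>0$ and then $k$ with $\Cp(G_k)<\eps$. The set $N:=\{x\in X\setm G_k:u(x)\ne v(x)\}$ has $\Cp(N)=0$, so by outer regularity of the Sobolev capacity at sets of capacity zero there is an open $V\supset N$ with $\Cp(V)<\eps$ (see \cite{BBbook}). Then $G:=G_k\cup V$ is open, $\Cp(G)<2\eps$, and on $X\setm G$ we have $u=v$ with $v$ continuous; thus $u|_{X\setm G}$ is real-valued continuous. Since $\eps>0$ was arbitrary, $u$ is quasicontinuous, which finishes the proof.

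\emph{Main obstacle.} The recurring difficulty is the gap between ``quasieverywhere'' and ``everywhere'': functions in $\Np(X)$ are genuine pointwise functions, so the exceptional set must be truly \emph{open}. This is exactly why one approximates by \emph{continuous} functions (so that the level sets $U_j$ are open), and why the capacity-zero discrepancy $N$ between $u$ and the pointwise limit $v$ cannot be absorbed by an $L^p$-argument but genuinely requires the outer regularity of $\Cp$ at capacity-zero sets --- a property which fails on general metric spaces and on which the hypotheses (properness and density of continuous functions) do their deepest work. The weak-type estimate is what upgrades $\Np$-convergence to convergence outside sets of small capacity and makes the whole scheme run.
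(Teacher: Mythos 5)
The paper does not prove this theorem; it is quoted from Bj\"orn--Bj\"orn--Shanmugalingam \cite{BBS5}, so your attempt has to be judged against that source. Your Steps 1 and 2 are the standard and correct skeleton: the localization via cut-offs and the gluing lemma is fine, and the choice of the \emph{open} sets $U_j=\{|u_{j+1}-u_j|>2^{-j}\}$ together with the weak-type estimate $\Cp(\{|w|>\la\})\le\la^{-p}\|w\|_{\Np(X)}^p$ correctly produces an open set $G_k$ of small capacity off which the $u_j$ converge uniformly to a continuous $v$ with $v=u$ q.e.

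The genuine gap is Step 3, and it is not a removable technicality. The assertion that a set $N$ with $\Cp(N)=0$ is contained in \emph{open} sets of arbitrarily small capacity is not a freely quotable fact: in general metric measure spaces it is essentially one of the open problems the present paper points to (Open problems 5.34--5.35 in \cite{BBbook}), and under the hypotheses of the theorem it is \emph{equivalent} to the theorem you are proving. Indeed, if every Newtonian function is quasicontinuous, take $w=\sum_j w_j$ with $w_j\ge1$ on $N$ and $\|w_j\|_{\Np(X)}\le 2^{-j}\eps$; then $w\in\Np(X)$, $w=\infty$ on $N$, and any open $G$ with $\Cp(G)<\eps$ off which $w$ is \emph{real-valued} continuous must contain $N$. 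Conversely, your Step 3 derives quasicontinuity from that covering property. In \cite{BBbook} the outer regularity of $\Cp$ (at zero sets, and in fact in general) is obtained \emph{as a consequence} of the quasicontinuity theorem, not as an ingredient; so citing it here is circular. Note also that soft arguments cannot close this: approximating an admissible function for $N$ by a continuous one in the $\Np$-norm says nothing about its values on $N$, which is a null set for both $\mu$ and $\Cp$, and iterating such approximations only reproduces a residual capacity-zero set. This is exactly where properness and the density of continuous functions must be exploited in a nontrivial way in \cite{BBS5}, and that work is missing from your argument. You correctly identified this as the main obstacle, but identifying it is not the same as overcoming it.
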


In several of our results the main assumption needed in the proof
is that all functions in $\Np(X)$ are quasicontinuous. 
The theorem above is the main result guaranteeing this. 
See 
Bj\"orn--Bj\"orn~\cite{BBbook}, Section~5.1, for several
examples, not supporting Poincar\'e inequalities, when this holds.
Moreover, in the other extreme situation when there are no curves in $X$,
then $\Np(X)=L^p(X)$ and thus the quasicontinuity follows directly
from Luzin's theorem. 
(Incidentally, Luzin's theorem (on $\R$) was first
obtained by Vitali~\cite{vitali} in 1905, while
Luzin~\cite{luzin} obtained it in 1912.)
In fact, there is no example of a nonquasicontinuous
Newtonian function, see Open problems~5.34 and~5.35 in \cite{BBbook}.

\section{Definition of  \texorpdfstring{$\cp$}{} and basic properties}
\label{sect-cp}

Recall that we assume that $E \subset X$ is a  bounded set.

\begin{deff} \label{deff-varcap}
For $A \subset E$ we define the variational capacity
\begin{equation*} 
  \cp (A,E) =\inf   \int_X g_u^p \, d\mu,
\end{equation*}
where the infimum is taken over all $u\in \Np_0 (E) $ such that
$u \ge 1$ on $A$.

(Here and later we use the usual convention that $\inf \emptyset = \infty$.)
\end{deff}

The infimum can equivalently be taken over 
all nonnegative $u\in \Np_0 (E) $ such that
$ u = 1$ on $A$.
If $E$ is measurable we may also equivalently integrate over $E$ instead
of $X$. 

Note that as $\Np_0(E)\subset\Np(X)$, it is natural to consider the minimal
\p-weak upper gradient $g_u$ with respect to $X$.
On the other hand, by Proposition~3.10 
in Bj\"orn--Bj\"orn~\cite{BBnonopen},
$g_u$ is also minimal as a \p-weak upper gradient on $E$ 
(if $E$ is measurable).

The variational capacity $\cp (A,E)$ has been used and studied earlier for 
bounded open $E$ in metric spaces by e.g.\ 
Bj\"orn--MacManus--Shanmugalingam~\cite{BMS}
and J.~Bj\"orn~\cite{BjIll},~\cite{JB-pfine}.
It can also be regarded as the condenser capacity 
$\cp (X\setm E,A,X)$, in which the test functions satisfy 
$u=0$ in $X\setm E$ and $u=1$ on $A$.
Such a capacity has been studied on metric spaces by
Heinonen--Koskela~\cite{HeKo98},
Kallunki--Shan\-mu\-ga\-lin\-gam~\cite{KaSh}
and 
Adamowicz--Bj\"orn--Bj\"orn--Shan\-mu\-ga\-lin\-gam~\cite{ABBSprime}.

A novelty here is that we consider nonopen $E$.
However most of the results below have not been given full proofs
in the Newtonian literature even for open $E$.
The following result 
shows that, under quite general assumptions, the
zero sets of $\Cp$ and $\cp$ are the same.
It generalizes Lemma~3.3 in \cite{BjIll}.

\begin{lem} \label{lem-Cp<=>cp}
Assume that $X$ supports a\/  $(p,p)$-Poincar\'e inequality for $\Np_0$,
see below,
and that $\Cp(X \setm E)>0$.
Let $A \subset E$. Then
$\Cp(A)=0$ if and only if $\cp(A,E)=0$.
\end{lem}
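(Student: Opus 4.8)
The plan is to prove the two implications in the lemma separately, noting that only $\cp(A,E)=0\implies\Cp(A)=0$ will actually use the hypotheses; the reverse implication holds with no extra assumptions on $X$ or $E$.

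Assume first that $\Cp(A)=0$. I would begin by recording that this forces $\mu(A)=0$: whenever $u\in\Np(X)$ satisfies $u\ge1$ on $A$, monotonicity of the outer measure and Chebyshev's inequality give $\mu^*(A)\le\mu(\{|u|\ge1\})\le\int_X|u|^p\,\dmu\le\|u\|_{\Np(X)}^p$, and the last quantity can be made arbitrarily small, so $\mu^*(A)=0$ and $A$ is measurable of measure zero. Next, $\Cp(A)=0$ implies that \p-almost every curve in $X$ avoids $A$ (Lemma~3.6 in Shanmugalingam~\cite{Sh-rev}, recalled in Section~\ref{sect-prelim}), so $0$ is a \p-weak upper gradient of $\chi_A$. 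By the Koskela--MacManus~\cite{KoMc} approximation result recalled in Section~\ref{sect-prelim}, $\chi_A$ then has an upper gradient in $\Lploc(X)$, and since $\|\chi_A\|_{\Np(X)}^p=\mu(A)=0$ we get $\chi_A\in\Np(X)$ with $g_{\chi_A}=0$ a.e.\ in $X$. Because $A\subset E$, the function $\chi_A$ vanishes on $X\setm E$, so it defines an element of $\Np_0(E)$ (extended by zero); being $\ge1$ on $A$, it is admissible in Definition~\ref{deff-varcap}, and therefore $\cp(A,E)\le\int_X g_{\chi_A}^p\,\dmu=0$.

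For the converse, suppose $\cp(A,E)=0$ and choose, for each $j$, a function $u_j\in\Np_0(E)$ with $u_j\ge1$ on $A$ and $\int_X g_{u_j}^p\,\dmu<2^{-j}$. Replacing $u_j$ by $\min\{\max\{u_j,0\},1\}$ — which still lies in $\Np_0(E)$, still equals $1$ on $A$, and whose minimal \p-weak upper gradient is $\le g_{u_j}$ a.e.\ by the truncation rules quoted in Section~\ref{sect-prelim} — we may assume $0\le u_j\le1$. Since $E$ is bounded it is contained in some ball $B$, and each $u_j$ vanishes outside $E\subset B$; then the assumption that $X$ supports a\/ $(p,p)$-Poincar\'e inequality for $\Np_0$, combined with $\Cp(X\setm E)>0$, yields a Maz\cprime ya-type estimate
\[
   \int_X|u_j|^p\,\dmu=\int_B|u_j|^p\,\dmu\le C\int_X g_{u_j}^p\,\dmu,
\]
with $C$ depending only on $E$ and the structural data, not on $j$. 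Hence $\|u_j\|_{\Np(X)}^p\le(1+C)\,2^{-j}$, and since each $u_j\in\Np(X)$ equals $1$ on $A$ it is admissible for $\Cp(A)$; thus $\Cp(A)\le(1+C)2^{-j}$ for every $j$, so $\Cp(A)=0$.

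The bookkeeping — measurability of $A$, the truncation, and the standard calculus rules for minimal \p-weak upper gradients — is routine. The one substantive step is the converse direction: $\cp(A,E)$ controls only $\int_X g_u^p\,\dmu$, whereas $\Cp$ involves the full norm $\|u\|_{\Np(X)}$, so the crux is to absorb $\int_X|u|^p\,\dmu$ into $\int_X g_u^p\,\dmu$ for test functions $u\in\Np_0(E)$. This is precisely what the $(p,p)$-Poincar\'e inequality for $\Np_0$ provides, and it is there that the hypothesis $\Cp(X\setm E)>0$ is essential: if it fails, then $\Np_0(E)=\Np(X)$ (see Section~\ref{sect-prelim}) and the equivalence can break down.
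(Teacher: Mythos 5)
Your proof is correct and follows essentially the same route as the paper's: the easy direction uses $\chi_A\in\Np_0(E)$ with $g_{\chi_A}=0$ a.e., and the converse absorbs the $L^p$-norm of an almost-minimizing admissible function into its gradient energy via the $(p,p)$-Poincar\'e inequality for $\Np_0$. Your extra bookkeeping (showing $\mu(A)=0$, truncating to $0\le u_j\le1$) is harmless but not needed.
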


Observe that if $\Cp(X \setm E)=0$ (and thus $X$ is bounded),
 then $1 \in \Np_0(E)$,
making $\cp(A,E)=0$ for all sets $A \subset E$.

The proof shows that the necessity  holds 
without any assumptions on $X$.
For the sufficiency we need a Poincar\'e inequality, but it is enough with
a considerably weaker Poincar\'e inequality than the one in 
Definition~\ref{def-PI}.
Note also that doubling is not needed.

\begin{deff} \label{def-relax-PI}
We say that $X$ supports a \emph{$(p,p)$-Poincar\'e inequality for $\Np_0$}
if for every bounded $E\subset X$ with $\Cp(X\setm E)>0$
there exists $C_E>0$  such that for all  $u\in\Np_0(E)$
\textup{(}extended by $0$ outside $E$\textup{)},
\begin{equation}
        \int_X |u|^p \,\dmu  
                \le C_E \int_{X} g_u^p \,\dmu.
\label{PI-NP0}
\end{equation}
\end{deff}

A direct consequence is that $\|u\|_{\Np(X)}^p \le \Ct_E \|g_u\|^p_{L^p(X)}$ 
for $u\in\Np_0(E)$.

See Bj\"orn--Bj\"orn~\cite{BBnonopen}
for  further discussion of this Poincar\'e inequality,
in particular a proof that it follows from the 
$(p,p)$-Poincar\'e inequality.

\begin{proof}  [Proof of Lemma~\ref{lem-Cp<=>cp}.]
Assume first that $\Cp(A)=0$.
Then $\chi_A \in\Np(X)$ and consequently $\chi_A \in\Np_0(E)$.
Since $g_{\chi_A}=0$ a.e., it follows that $\cp(A,E)=0$.

Conversely, assume that $\cp(A,E)=0$
and let $\eps>0$.
Then there is $u \in \Np_0(E)$ such that $u=1$ on $A$
and $\int_{X} g_u^p \,\dmu< \eps$.
The $(p,p)$-Poincar\'e inequality for $\Np_0$ then yields that
\[
   \Cp(A) \le \|u\|_{\Np(X)}^p
	\le \Ct_E \int_X g_u^p\,d\mu
	< \Ct_E \eps.
\]
Letting $\eps \to 0$ concludes the proof.
\end{proof}

Let us collect the main general properties of the capacity $\cp$.
Observe that these 
properties all hold in full generality 
(apart from the requirement $p>1$ in \ref{cp-Choq-E-sum}).

\begin{thm} \label{thm-cp}
Assume that
$A_1, A_2,\ldots \subset E$.
Then the following properties hold\/\textup{:}
\begin{enumerate}
\renewcommand{\theenumi}{\textup{(\roman{enumi})}}%
\item \label{cp-emptyset-sum}
  $\cp(\emptyset,E)=0$\textup{;}
\item \label{cp-subset-sum}
  if $A_1 \subset A_2 \subset E$\textup{,}
 then\/ $\cp(A_1,E) \le \cp(A_2,E)$\textup{;}
\item \label{cp-subset-sum-2}
  if $A \subset E_1 \subset E_2$\textup{,}
 then\/ $\cp(A,E_2) \le \cp(A,E_1)$\textup{;}
\item \label{cp-strong-subadd}
$\cp$ is strongly subadditive, i.e.
\[ 
   \cp(A_1 \cup A_2,E) + \cp(A_1 \cap A_2,E) \le \cp(A_1,E)+\cp(A_2,E);
\]
\item \label{cp-subadd-sum}
$\cp$ is countably subadditive\/
\textup{(}and is also an 
outer measure\/\textup{)}, i.e.
  \[
      \cp\biggl(\bigcup_{i=1}^\infty A_i,E\biggr) 
          \le \sum_{i=1}^\infty \cp(A_i,E)
      \textup{;}
  \]
\item \label{cp-Choq-E-sum}
if\/ $1<p<\infty$ and  $A_1 \subset A_2 \subset \cdots \subset E$\textup{,}  then
\[
      \cp\biggl(\bigcup_{i=1}^\infty A_i,E\biggr) 
          = \lim_{i \to \infty} \cp(A_i,E)
      \textup{;}
  \]
\item \label{cp-F-bdyF}
\setcounter{saveenumi}{\value{enumi}}
if $F \subset E$ is closed\/ {\rm(}as a subset of $X$\/{\rm)}\textup{,} 
then\/ $\cp(F,E)=\cp(\bdy F,E)$.
\end{enumerate}
\end{thm}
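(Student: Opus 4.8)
The plan is to prove the two inequalities separately. Since $F$ is closed we have $\bdy F=F\setminus\interior F\subseteq F$, and hence $\cp(\bdy F,E)\le\cp(F,E)$ by the monotonicity in part~\ref{cp-subset-sum}. So all the work lies in the reverse inequality $\cp(F,E)\le\cp(\bdy F,E)$, which we may assume to have a finite right-hand side (otherwise there is nothing to prove) and in which we may assume $F\neq\emptyset$.

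For the reverse inequality I would take an arbitrary $u\in\Np_0(E)$ with $u\ge1$ on $\bdy F$ and $\int_X g_u^p\,\dmu<\infty$, and ``flatten'' it to the constant $1$ throughout $F$. First, replacing $u$ by $\min\{\max\{u,0\},1\}$ --- which stays in $\Np_0(E)$, since that space is a lattice, and does not increase $\int_X g_u^p\,\dmu$ because $g_{\max\{u,0\}}=g_u\chi_{\{u>0\}}$ and $g_{\min\{u,1\}}=g_u\chi_{\{u<1\}}$ --- I may assume $0\le u\le1$ and $u=1$ everywhere on $\bdy F$. Then set
\[
   v=\begin{cases} 1 & \text{on }F,\\ u & \text{on }X\setminus F. \end{cases}
\]
This $v$ is measurable, satisfies $v\ge1$ on $F$ and $v=u=0$ on $X\setminus E$ (because $X\setminus E\subseteq X\setminus F$), and satisfies $\int_X|v|^p\,\dmu\le\|u\|_{L^p(X)}^p+\mu(F)<\infty$, since the bounded set $F$ has finite measure. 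So once we know that $v\in\Np(X)$, it follows that $v\in\Np_0(E)$ and that $v$ is admissible for $\cp(F,E)$.

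The step I expect to be the main obstacle is producing an upper gradient of $v$ in $L^p(X)$: concretely, showing that whenever $g\in L^p(X)$ is an upper gradient of $u$, then $\tilde g:=g\chi_{X\setminus F}$ is an upper gradient of $v$ (such $g$ exist with $\int_X g^p\,\dmu$ as close as we wish to $\int_X g_u^p\,\dmu$, by the Koskela--MacManus approximation~\cite{KoMc}). I would check this curve by curve, the key geometric observation being that $F$ is closed and $u=v=1$ on $\bdy F$, so a curve can only pass from $X\setminus F$ into $F$ across $\bdy F$. For instance, if $\gamma\colon[0,l]\to X$ has both endpoints in $X\setminus F$ but meets $F$, put $t_1=\inf\{t:\gamma(t)\in F\}$ and $t_2=\sup\{t:\gamma(t)\in F\}$; then $0<t_1\le t_2<l$, and $\gamma(t_1),\gamma(t_2)\in\bdy F$ (a point of $\interior F$ would, by continuity, contradict the choice of $t_1$ or $t_2$), so $v(\gamma(t_1))=v(\gamma(t_2))=1$; moreover on each of the subcurves $\gamma|_{[0,t_1]}$ and $\gamma|_{[t_2,l]}$ one has $v=u$, and $\gamma$ stays outside $F$ except at one endpoint. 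Therefore
\begin{multline*}
   |v(\gamma(0))-v(\gamma(l))|\le|v(\gamma(0))-v(\gamma(t_1))|+|v(\gamma(t_2))-v(\gamma(l))|\\
   \le\int_{\gamma|_{[0,t_1]}}\tilde g\,ds+\int_{\gamma|_{[t_2,l]}}\tilde g\,ds\le\int_\gamma\tilde g\,ds.
\end{multline*}
The remaining cases (both endpoints in $F$; exactly one endpoint in $F$; $\gamma$ disjoint from $F$) are similar or easier. Once $\tilde g$ is known to be an upper gradient of $v$, we obtain $v\in\Np(X)$ with $g_v\le\tilde g$ a.e., hence
\[
   \cp(F,E)\le\int_X g_v^p\,\dmu\le\int_X\tilde g^p\,\dmu=\int_{X\setminus F}g^p\,\dmu\le\int_X g^p\,\dmu.
\]
Letting $g$ decrease toward $g_u$, and then taking the infimum over all admissible $u$, gives $\cp(F,E)\le\cp(\bdy F,E)$, which together with the first inequality yields $\cp(F,E)=\cp(\bdy F,E)$.
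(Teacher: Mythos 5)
Your argument for part \ref{cp-F-bdyF} is correct and is essentially the paper's own proof: the same comparison function $v$ (equal to $1$ on $F$ and to $u$ off $F$), the same observation that a curve can only pass between $F$ and $X\setm F$ through $\bdy F$, where $u=v=1$, and the same device of splitting a curve at its first/last contact with $F$. The only genuine difference is technical: you verify the upper gradient inequality for true upper gradients $g$ of $u$ on \emph{all} curves and then pass to $g_u$ via the Koskela--MacManus approximation \cite{KoMc}, whereas the paper works directly with $g_u$ as a \p-weak upper gradient of $v$, restricting attention to the \p-a.e.\ curve on which \eqref{ug-cond} holds for $u$ and $g_u$ on all subcurves. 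Both routes work; yours yields the marginally sharper bound $\int_{X\setm F}g^p\,\dmu$, which is not needed.

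The gap is that the statement is all of Theorem~\ref{thm-cp}, and you have proved only its last part. Parts \ref{cp-emptyset-sum}--\ref{cp-subset-sum-2} are immediate, but \ref{cp-strong-subadd}--\ref{cp-Choq-E-sum} require arguments that are entirely absent from your proposal: \ref{cp-strong-subadd} uses the lattice pair $v=\max\{u_1,u_2\}$, $w=\min\{u_1,u_2\}$ together with the a.e.\ identity $g_v^p+g_w^p=g_{u_1}^p+g_{u_2}^p$; \ref{cp-subadd-sum} uses $u=\sup_i u_i$ and the fact that $\sup_i g_{u_i}$ is a \p-weak upper gradient of $u$ (Lemma~\ref{lem-sup-u_i-weak}); and \ref{cp-Choq-E-sum} --- the deepest part, and the only place where $p>1$ is used --- requires extracting weakly convergent subsequences of near-minimizers and of their gradients in $L^p$ and applying a Mazur-lemma argument (the paper invokes Lemma~3.2 of Bj\"orn--Bj\"orn--Parviainen \cite{BBP}) to produce an admissible limit function whose gradient norm is at most $\lim_{i\to\infty}\cp(A_i,E)$. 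None of these steps follows from, or resembles, the boundary-crossing argument you give for \ref{cp-F-bdyF}, so they must be supplied separately.
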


Even if $E$ is open, \ref{cp-Choq-E-sum} is not true
(in general) for $p=1$. We
refer the reader to 
Bj\"orn--Bj\"orn~\cite{BBbook} for a counterexample  due to 
Korte~\cite{korte-private}
(it also applies to $\cpvar$ from~\eqref{eq-def-cpvar}).

To prove \ref{cp-subadd-sum} we need the following simple 
lemma, which probably belongs to folklore.
It is a special case of Lemma~1.52 in \cite{BBbook},
but can be proved more easily along the lines of 
the proof of Lemma~1.28 in \cite{BBbook}.

\begin{lem} \label{lem-sup-u_i-weak}
Let 
$u_i \le M\in\R$, $i=1,2,\ldots$, 
be functions with \p-weak upper gradients $g_i$.
Let further $u=\sup_i u_i$ and $g=\sup_i g_i$.
Then $g$ is a \p-weak upper gradient of $u$.
\end{lem}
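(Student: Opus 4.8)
The plan is to reduce the supremum of infinitely many functions to a sequence of finite maxima, for which the result is a standard lattice property of upper gradients, and then pass to the limit using the fact that a property holding for \p-almost every curve is stable under countable unions of exceptional families. First I would recall the elementary fact that if $v_1,v_2$ have \p-weak upper gradients $h_1,h_2$, then $\max\{v_1,v_2\}$ has the \p-weak upper gradient $\max\{h_1,h_2\}$; indeed on any curve $\ga$ along which \eqref{ug-cond} holds for both pairs, one checks directly that $|\max\{v_1,v_2\}(\ga(0))-\max\{v_1,v_2\}(\ga(l_\ga))|\le\int_\ga\max\{h_1,h_2\}\,ds$ by splitting according to which of $v_1,v_2$ is larger at the two endpoints. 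Iterating, $w_n:=\max_{i\le n}u_i$ has \p-weak upper gradient $h_n:=\max_{i\le n}g_i$, where the exceptional curve family for $w_n$ is contained in the union of the $n$ exceptional families of the $u_i$.

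Next I would let $\Ga=\bigcup_{i=1}^\infty\Ga_i$, where $\Ga_i$ is the (zero \p-modulus) family of curves for which $g_i$ fails the upper gradient inequality for $u_i$; by countable subadditivity of the \p-modulus, $\Ga$ also has zero \p-modulus. Fix any curve $\ga:[0,l_\ga]\to X$ with $\ga\notin\Ga$. Then for every $n$,
\[
   |w_n(\ga(0))-w_n(\ga(l_\ga))| \le \int_\ga h_n\,ds \le \int_\ga g\,ds.
\]
Since $u_i\le M$ for all $i$, the functions $w_n$ increase pointwise to $u=\sup_i u_i$ with $u\le M$, so $w_n(\ga(0))\to u(\ga(0))$ and $w_n(\ga(l_\ga))\to u(\ga(l_\ga))$, and both limits are finite (bounded above by $M$, and bounded below by $w_1$ at the respective point). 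Passing to the limit $n\to\infty$ in the displayed inequality gives $|u(\ga(0))-u(\ga(l_\ga))|\le\int_\ga g\,ds$ for every $\ga\notin\Ga$. Since $g=\sup_i g_i$ is measurable as a countable supremum of measurable functions, this shows $g$ is a \p-weak upper gradient of $u$.

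The only genuinely delicate point is the passage to the limit: one must ensure the endpoint values $w_n(\ga(0))$ and $w_n(\ga(l_\ga))$ actually converge to the values of $u$ at those points (not merely along a.e.\ curve), which is exactly where the hypothesis $u_i\le M$ is used — it guarantees the pointwise increasing sequence $w_n$ converges everywhere to a finite limit, and that limit is $u=\sup_i u_i$ by definition, with no issue of $+\infty$ appearing on the left-hand side of \eqref{ug-cond}. Without the uniform bound the supremum could be $+\infty$ at an endpoint while each $u_i$ is finite there, and the statement would need the convention in Definition~\ref{deff-ug}; with the bound in place the argument is clean. I expect this limiting step, together with correctly bookkeeping the exceptional families, to be the main (and essentially only) obstacle, the finite-$\max$ lattice property being routine.
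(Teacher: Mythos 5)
Your proof is correct and is essentially the intended argument: the paper itself does not prove this lemma but defers to Lemmas~1.28 and~1.52 of Bj\"orn--Bj\"orn \cite{BBbook}, whose proof likewise collects the countably many exceptional curve families into a single zero \p-modulus family and verifies the endpoint inequality for the supremum on every remaining curve (whether one approximates $\sup_i u_i$ at an endpoint directly by some $u_i$, or, as you do, passes through the finite maxima $w_n$, is immaterial). The one small inaccuracy is your claim that the endpoint values are finite because they are ``bounded below by $w_1$'': since the $u_i$ are extended real-valued, $u(\gamma(0))$ could be $-\infty$; but in that case every $u_i(\gamma(0))=-\infty$ and the inequality for $(u_1,g_1)$ together with the convention in Definition~\ref{deff-ug} already forces $\int_\gamma g_1\,ds=\infty\le\int_\gamma g\,ds$, so the conclusion holds trivially and your argument needs only this one-line supplement.
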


\begin{proof}[Proof of Theorem~\ref{thm-cp}]
\ref{cp-emptyset-sum}--\ref{cp-subset-sum-2}
These statements are trivial.

\ref{cp-strong-subadd}
We may assume that the right-hand side is finite.
Let $\eps >0$.
We can thus find $u_j \in \Np_0(E)$, $\chi_{A_j} \le u_j \le 1$, such that
$\|g_{u_j}\|_{L^p(X)}^p < \cp(A_j,E)+\eps$, $j=1,2$.
Let $v=\max\{u_1,u_2\}$ and $w=\min\{u_1,u_2\}$.
Then $g_v=g_{u_1} \chi_{\{u_1 > u_2\}} + g_{u_2} \chi_{\{u_2 \ge u_1\}}$
and $g_w=g_{u_2} \chi_{\{u_1 > u_2\}} + g_{u_1} \chi_{\{u_2 \ge u_1\}}$.
Since $v$ and $w$ are admissible in the definition of the variational
capacity of $A_1\cup A_2$ and $A_1\cap A_2$, respectively, we obtain
\begin{align*}
   \cp(A_1 \cup A_2,E) + \cp(A_1 \cap A_2,E) 
     &  \le \int_X (g_v^p + g_w^p) \, d\mu 
       = \int_X (  g_{u_1}^p  + g_{u_2}^p) \, d\mu\\
      &< \cp(A_1,E)+\cp(A_2,E) + 2\eps.
\end{align*}
Letting $\eps \to 0$ completes the proof of 
\ref{cp-strong-subadd}.

\ref{cp-subadd-sum} 
We may assume that the right-hand side is finite.
Let $\eps>0$.
Choose $u_i \in \Np_0(E)$ with $\chi_{A_i} \le u_i \le 1$
such that
\[
    \|g_{u_i}\|_{L^p(X)}^p \le \cp(A_i,E)+ \frac{\eps}{2^i}.
\]
Let $u=\sup_i u_i$ and $g=\sup_i g_{u_i}$.
By Lemma~\ref{lem-sup-u_i-weak}, $g$ is a \p-weak upper gradient of $u$.
Clearly $u \ge 1$ on $\bigcup_{i=1}^\infty A_i$.
Hence
\begin{align*}
      \cp\biggl(\bigcup_{i=1}^\infty A_i,E\biggr) 
          & 
        \le \int_X \Bigl(\sup_i g_{u_i}\Bigr)^p \,d\mu 
       \le  \int_X \sum_{i=1}^\infty g_{u_i}^p \,d\mu 
      = \sum_{i=1}^\infty   \int_X g_{u_i}^p \,d\mu  
\\          & 
\le \sum_{i=1}^\infty \Bigl( \cp(A_i,E)+ \frac{\eps}{2^i}\Bigr) 
       =  \eps + \sum_{i=1}^\infty \cp(A_i,E).
\end{align*}
Letting $\eps \to 0$ completes the proof of \ref{cp-subadd-sum}.

\ref{cp-Choq-E-sum} 
Let $A=\bigcup_{i=1}^\infty A_i$.
That $\lim_{i \to \infty} \cp(A_i,E) \le \cp (A,E)$ follows from monotonicity,
and monotonicity also shows that the limit always exists.
Conversely, assume that $\lim_{i \to \infty} \cp(A_i,E) < \infty$.

We can 
find $u_i \in \Np_0(E)$ with $\chi_{A_i} \le u_i \le 1$ and  such that
\[
           \|g_{u_i}\|_{L^p(X)}^p  < \cp(A_i,E)+1/i.
\]
By 
Lemma~3.2 in Bj\"orn--Bj\"orn--Parviainen~\cite{BBP}
(or Lemma~6.2 in Bj\"orn--Bj\"orn~\cite{BBbook}), 
there are $u,g \in L^p(X)$, 
finite convex combinations
$v_j=\sum_{i=j}^\infty a_{j,i} u_i$
and a strictly increasing sequence of  indices $\{i_k\}_{k=1}^\infty$
such that 
both $u_{i_k} \to u$ and $g_{u_{i_k}} \to g$ weakly in $L^p(X)$, as $k \to \infty$,
$v_j \to u$ q.e., as $j\to \infty$,
and $g$ is a \p-weak upper gradient of $u$.
Without loss of generality $u=0$ outside of $E$.

It is clear that $v_j \ge \chi_{A_j}$
and thus $u \ge \chi_A$ q.e.
Let $v:=\max\{u,\chi_A\}=u$ q.e.
Then $g$ is a \p-weak upper gradient also of $v$.
As $v\ge\chi_A$, we obtain
\begin{align*}
   \cp(A,E) &\le  \|g\|_{L^p(X)}^p 
          \le \liminf_{k \to \infty}  \|g_{u_{i_k}}\|_{L^p(X)}^p \\
          &
           \le  \lim_{k \to \infty} (\cp(A_{i_k},E) + 1/i_k)
           =   \lim_{i \to \infty} \cp(A_i,E).
\end{align*}

\ref{cp-F-bdyF}
Let $u$ be admissible in the definition of 
$\cp(\bdy F,E)$. 
Without loss of generality we can assume that $0 \le u \le 1$ and that
$u=0$ in $X\setm E$.
Let 
\[
       v = \begin{cases}
           1& \text{in } F, \\
           u& \text{in } X \setm F.
           \end{cases}
\]
Then $\|v\|_{L^p(X)} \le \|u\|_{L^p(X)}+ \mu(F) < \infty$.
Let $\ga:[0,l_\ga] \to X$ be a curve such that \eqref{ug-cond} holds
for $u$ and $g_u$ on $\ga$ and all its subcurves. 
If $\ga\subset F$ or $\ga\subset X\setm F$, then it is straightforward
that \eqref{ug-cond} holds for $v$ and $g_u$ on $\ga$.
If $\ga$ intersects both $F$ and $X\setm F$, we can, by splitting
$\ga$ into parts if necessary, and possibly 
reversing the direction,
assume that $x=\ga(0)\in F$ and
$y=\ga(l_\ga)\in X\setm F$.
Letting $t=\sup\{\tau: \ga(\tau)\in F\}$ we have
that $\ga(t)\in \bdy F$ and
hence
\[
|v(x)-v(y)| = |u(\ga(t))-u(y)| \le \int_{\ga|_{[t,l_\ga]}} g_u\,ds
     \le \int_{\ga} g_u\,ds,
\]
i.e.\ \eqref{ug-cond} holds for $v$ and $g_u$ on $\ga$ as well.
Thus $g_u$ is a \p-weak upper gradient of $v$ and hence
$v\in\Np(X)$.

As $v=u=0$ in $X\setm E$, we have that
$v \in \Np_0(E)$ and 
\[
    \cp(F,E) \le \int_{X} g^p_v\, d\mu
    \le \int_{X} g^p_u\, d\mu.
\]
Taking infimum over all $u$, we see that
$\cp(F,E) \le \cp(\bdy F,E)$.
The converse inequality is trivial.
\end{proof}

\section{Outer and Choquet capacity}
\label{sect-outer-cp}

In $\R^n$, the capacity is usually defined in a way which automatically
makes it an outer capacity.
Our definition using Newtonian functions is more direct, but $\cp$ is
an outer capacity only under some additional assumptions on the 
underlying space.
The following theorem shows this.

\begin{thm} \label{thm-outercap-cp}
Assume that all functions in $\Np(X)$ are quasicontinuous\/
\textup{(}which in particular holds if 
$X$ is complete and supports a\/ $(1,p)$-Poincar\'e inequality,
and  $\mu$ is doubling\/\textup{)}.
If $p>1$, 
then\/ $\cp$ is an \emph{outer capacity} for sets in\/ $\interior E$, 
i.e.\ for every 
$A\subset \interior E$,
\begin{equation} \label{eq-outercap-cp}
\cp(A,E)=\inf_{\substack{G \text{ open} \\  A\subset G \subset E}} \cp(G,E).
\end{equation}
If $p=1$, then \eqref{eq-outercap-cp}
holds for all $A \subset E$ with\/ $\dist(A,X \setm E)>0$. 
\end{thm}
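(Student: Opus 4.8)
The inequality ``$\ge$'' in \eqref{eq-outercap-cp} is trivial: any open $G$ with $A\subset G\subset E$ satisfies $\cp(A,E)\le\cp(G,E)$ by Theorem~\ref{thm-cp}\,\ref{cp-subset-sum}, and such $G$ exists (take $G=\interior E$). So the task is, given $\eps>0$, to exhibit an open $G$ with $A\subset G\subset E$ and $\cp(G,E)$ close to $\cp(A,E)$. I may assume $\cp(A,E)<\infty$ (otherwise monotonicity makes \eqref{eq-outercap-cp} trivial) and, by the remark after Lemma~\ref{lem-Cp<=>cp}, that $\Cp(X\setm E)>0$, so $X\setm E\ne\emptyset$; I may also assume $A\ne\emptyset$, whence $d:=\dist(A,X\setm E)$ is finite. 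The plan is two-stage: first prove \eqref{eq-outercap-cp} whenever $d>0$, for every $p\ge1$ --- this already yields the $p=1$ assertion --- and then, for $p>1$, bootstrap from that to arbitrary $A\subset\interior E$.

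\emph{Stage 1 ($d>0$).} Fix $\eps>0$ and choose $u\in\Np_0(E)$ with $0\le u\le1$, $u=1$ on $A$, and $\int_X g_u^p\,d\mu<\cp(A,E)+\eps$. Since $u\in\Np(X)$ is quasicontinuous, for $\de>0$ there is an open $V\subset X$ with $\Cp(V)<\de$ such that $u|_{X\setm V}$ is real-valued continuous. Fix $t\in(0,1)$. Then $\{u>t\}\setm V$ is relatively open in $X\setm V$, so it equals $O\setm V$ for some open $O\subset X$; in particular $O\setm V\subset\{u>t\}$, and $A\subset O\cup V$ because $u=1$ on $A$. With $N=\{x:\dist(x,A)<d/3\}$ (open, $A\subset N\subset E$ since $d/3<d$), I would set $G=(O\cup V)\cap N$, which is open with $A\subset G\subset E$. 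To cover $G$, I would pick $v\in\Np(X)$ with $0\le v\le1$, $v=1$ on $V$, $\|v\|_{\Np(X)}^p<\de$ (a truncated near-minimizer for $\Cp(V)$), and a Lipschitz cutoff $\eta$ equal to $1$ on $\{\dist(\,\cdot\,,A)\le d/3\}$ and vanishing outside $\{\dist(\,\cdot\,,A)\le 2d/3\}\subset E$, with $g_\eta\le\tfrac{3}{d}$ on the transition annulus; then $\eta v\in\Np_0(E)$ since its support is a bounded subset of $E$. The function $\phi=t^{-1}u+\eta v\in\Np_0(E)$ satisfies $\phi\ge1$ on $G$ (on $N\cap V$ it dominates $\eta v=1$; on $N\cap(O\setm V)$ it dominates $t^{-1}u>1$), so $\cp(G,E)\le\int_X g_\phi^p\,d\mu$. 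Using $g_\phi\le t^{-1}g_u+g_v+v\mspace{1mu}g_\eta$ together with $\|g_v\|_{L^p(X)}^p\le\|v\|_{\Np(X)}^p<\de$ and $\|v\mspace{1mu}g_\eta\|_{L^p(X)}\le\tfrac{3}{d}\|v\|_{L^p(X)}$, the $L^p$ triangle inequality gives
\[
   \cp(G,E)^{1/p}\le t^{-1}\bigl(\cp(A,E)+\eps\bigr)^{1/p}+\bigl(1+\tfrac{3}{d}\bigr)\de^{1/p}.
\]
Letting $\de\to0$, then $t\to1$, then $\eps\to0$ yields $\inf_G\cp(G,E)\le\cp(A,E)$, finishing Stage~1 and the case $p=1$.

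\emph{Stage 2 ($p>1$, $A\subset\interior E$).} Every $x\in A\subset\interior E$ has $\dist(x,X\setm E)>0$, so $A=\bigcup_{j=1}^\infty A_j$ where $A_j:=\{x\in A:\dist(x,X\setm E)\ge1/j\}$ increases with $j$ and $\dist(A_j,X\setm E)\ge1/j>0$. By Stage~1, for each $j$ there is an open $G_j$ with $A_j\subset G_j\subset E$ and $\cp(G_j,E)<\cp(A_j,E)+\eps2^{-j}\le\cp(A,E)+\eps2^{-j}$. Putting $U_k=\bigcup_{j=1}^k G_j$ (open and increasing) and noting $A_{k-1}\subset U_{k-1}\cap G_k$, strong subadditivity (Theorem~\ref{thm-cp}\,\ref{cp-strong-subadd}) gives inductively $\cp(U_k,E)<\cp(A_k,E)+\eps\le\cp(A,E)+\eps$. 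Since $U:=\bigcup_k U_k$ is open with $A\subset U\subset E$, continuity from below (Theorem~\ref{thm-cp}\,\ref{cp-Choq-E-sum}, which is exactly where $p>1$ is used) yields $\cp(U,E)=\lim_k\cp(U_k,E)\le\cp(A,E)+\eps$; letting $\eps\to0$ completes the proof.

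I expect the crux to be Stage~1. The near-minimizer $u$ is only quasicontinuous, so $\{u>t\}$ is open only modulo a set $V$ of small \emph{Sobolev} capacity, and $V$ may protrude from $E$; the device that repairs this is the distance cutoff $\eta$, whose contribution to the energy is controlled by $\|v\mspace{1mu}g_\eta\|_{L^p(X)}\le\tfrac{3}{d}\|v\|_{L^p(X)}$, small because $\|v\|_{\Np(X)}$ is. This is precisely why the natural hypothesis for the base case is $\dist(A,X\setm E)>0$ rather than $A\subset\interior E$, and Stage~2 --- the only step that genuinely uses $p>1$ --- is what carries the result to all $A\subset\interior E$.
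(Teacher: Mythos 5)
Your proof is correct and follows essentially the same route as the paper's: the quasicontinuity-plus-distance-cutoff argument for the case $\dist(A,X\setminus E)>0$, followed for $p>1$ by exhaustion with the sets $A_j$, an induction on strong subadditivity (which the paper isolates as Lemma~\ref{lem-Aj-Gj-eps}), and Theorem~\ref{thm-cp}\,\ref{cp-Choq-E-sum}. Only cosmetic details differ (the parameter $t$ in place of $1-\eps$, the radii $d/3$ and $2d/3$ in place of $d$ and $2d$ with $d<\tfrac12\dist(A,X\setminus E)$).
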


When $p=1$ we do not know if \eqref{eq-outercap-cp} holds for
arbitrary $A \subset \interior E$. 

In Bj\"orn--Bj\"orn--Shan\-mu\-ga\-lin\-gam~\cite{BBS5}, p.\ 1199,
\eqref{eq-outercap-cp} was observed, under the more restrictive
assumptions that $X$ is proper, $E$ is open 
and $A \Subset E$ (which is however not enough to obtain
Theorem~\ref{thm-equiv-HeKiMa}). 
In Theorem~\ref{thm-outercap-cp}, the requirement $A \Subset E$ has been
weakened, and for open $E$ the result now holds without any additional 
assumptions on $A$.
The proof is a fair bit more
involved in this case and uses e.g.\ the strong subadditivity of the capacity
and Theorem~\ref{thm-cp}\,\ref{cp-Choq-E-sum}
(it is here that $p>1$ is needed).

Note that it is possible to have $\cp(A,E)<\infty$ even if $A$ ``reaches'' 
to the boundary $\bdry E$ and both $A$ and $E$ are open, see 
Example~\ref{ex-cap-A-to-bdryE}.
Note also that there are no known examples of Newtonian functions
which are not quasicontinuous.

For general $A \subset E$ (i.e.\ such that $A \not\subset \interior E$),
\eqref{eq-outercap-cp} is impossible (unless $\cp(A,E)=\infty$)
 as  there are no open sets $G \supset A$ such that $\cp(G,E)$ is defined.
In this case the natural question would be if 
\begin{equation} \label{eq-outercap-cp-rel}
\cp(A,E)=\inf_{\substack{G \text{ relatively open} \\  A\subset G \subset E}} \cp(G,E).
\end{equation}

This is not true in general, see Example~\ref{ex-arc},
but can be true also when $E$ is nonopen, see Example~\ref{ex-dense}
and Proposition~\ref{prop-cp=cpvar} below.
Of course for open $E$ it follows from Theorem~\ref{thm-outercap-cp} 
(provided that $p>1$ and all functions in $\Np(X)$ are
quasicontinuous).

\begin{example}   \label{ex-cap-A-to-bdryE}
Let $E=(-1,1)\times(0,1) \subset X=\R^2$ (unweighted)
with $1 < p < 2$, and let 
$A=\{(x,y)\in E: |x|<y<\tfrac12\}$. 
Then the function $u(x,y)=\min\{y/|x|,1\}$ multiplied by the cut-off
function
$\eta(x,y)=\min\{2-4\max\{|x|,y\},1\}_\limplus$ belongs to
$\Np_0(E)$ and is admissible in the definition of $\cp(A,E)$.
\end{example}

\begin{example} \label{ex-arc}
Let $E=[-1,1]\times[0,1] \subset X=\R^2$ (unweighted)
with $1 < p \le 2$, and let $A=\{(0,0)\}$. 
Then any relatively open set $G \supset A$ contains an open subinterval
of the real axis.
Since all functions in $\Np(X)$ are absolutely continuous on \p-almost
every curve (by Shan\-mu\-ga\-lin\-gam~\cite{Sh-rev}),
it follows that 
there is no function $u \in \Np_0(E)$ such that $u=1 $ on G.
Thus the right-hand side in 
\eqref{eq-outercap-cp-rel} is infinite.
The left-hand side is however $0$, by Lemma~\ref{lem-Cp<=>cp}.
\end{example}

\begin{example} \label{ex-dense}
Let $E=B(0,1)\setm D  \subset X=\R^2$ (unweighted)
with $1 < p \le 2$, where $D$ is a countable dense subset of $B(0,1)$.
As $\interior E = \emptyset$, Theorem~\ref{thm-outercap-cp} is directly
applicable only for $A=\emptyset$.
At the same time $\Cp(D)=0$, which shows that $\Np_0(E)=\Np_0(B(0,1))$,
and thus that $\cp(A \cap E,E)=\cp(A,B(0,1))$ for $A \subset B(0,1)$.
Hence, Theorem~\ref{thm-outercap-cp} applied to $\cp(\,\cdot\,,B(0,1))$
shows that \eqref{eq-outercap-cp-rel} is in fact true in this case.
\end{example}

It may be worth pointing out that $\Np_0(E)$ is closely related
to the fine interior of $E$, see 
Bj\"orn--Bj\"orn~\cite{BBnonopen}.
In fact, $\Np_0(E)=\Np_0(B(0,1))$ in Example~\ref{ex-dense} holds
because the set $E$ therein is finely open.

To prove Theorem~\ref{thm-outercap-cp}, we shall need the following simple
lemma which is based on the strong subadditivity of the capacity.

\begin{lem}  \label {lem-Aj-Gj-eps}
Let $A_1\subset A_2\subset \cdots \subset E$ be arbitrary and such that\/
$\cp(A_j,E)<\infty$ for all $j=1,2,\ldots$\,.
For each  $j$ let moreover $G_j\subset E$ be such that
$G_j\supset A_j$ and
\[
\cp(G_j,E) \le \cp(A_j,E) + \eps_j,
\]
where $\eps_j>0$ are arbitrary.
Let $\Gt_k = \bigcup_{j=1}^k G_j$.
Then for all $k=1,2,\ldots$,
\[
\cp(\Gt_k,E) \le \cp(A_k,E) + \sum_{j=1}^k \eps_j.
\]
\end{lem}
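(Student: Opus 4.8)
The plan is to prove the inequality by induction on $k$, using strong subadditivity (Theorem~\ref{thm-cp}\,\ref{cp-strong-subadd}) as the engine at each step. For $k=1$ the claim is just the hypothesis $\cp(\Gt_1,E)=\cp(G_1,E)\le\cp(A_1,E)+\eps_1$. For the induction step, I would write $\Gt_{k}=\Gt_{k-1}\cup G_k$ and apply strong subadditivity to the pair $\Gt_{k-1}$ and $G_k$:
\[
\cp(\Gt_{k},E)+\cp(\Gt_{k-1}\cap G_k,E)\le\cp(\Gt_{k-1},E)+\cp(G_k,E).
\]
The key observation making this work is that $A_{k-1}\subset\Gt_{k-1}\cap G_k$: indeed $A_{k-1}\subset\Gt_{k-1}$ since $A_{k-1}\subset A_{k-1}\subset G_{k-1}$ (more directly $A_{k-1}\subset G_j$ for $j=k-1$), and $A_{k-1}\subset A_k\subset G_k$ by monotonicity of the $A_j$'s together with $A_k\subset G_k$. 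Hence by monotonicity (Theorem~\ref{thm-cp}\,\ref{cp-subset-sum}), $\cp(\Gt_{k-1}\cap G_k,E)\ge\cp(A_{k-1},E)$.

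Combining these, and dropping the (nonnegative, finite) term $\cp(A_{k-1},E)$ appropriately, I get
\[
\cp(\Gt_{k},E)\le\cp(\Gt_{k-1},E)+\cp(G_k,E)-\cp(A_{k-1},E).
\]
Now I invoke the induction hypothesis $\cp(\Gt_{k-1},E)\le\cp(A_{k-1},E)+\sum_{j=1}^{k-1}\eps_j$ and the hypothesis $\cp(G_k,E)\le\cp(A_k,E)+\eps_k$. Substituting, the two copies of $\cp(A_{k-1},E)$ cancel, leaving
\[
\cp(\Gt_{k},E)\le\cp(A_k,E)+\sum_{j=1}^{k}\eps_j,
\]
which is exactly the desired bound for $k$. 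One should note at the outset that all quantities involved are finite: $\cp(A_j,E)<\infty$ by hypothesis, hence $\cp(G_j,E)<\infty$, and $\cp(\Gt_k,E)<\infty$ follows inductively (or from countable subadditivity), so the subtractions and cancellations of $\cp(A_{k-1},E)$ are legitimate.

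The argument is essentially routine once the set inclusion $A_{k-1}\subset\Gt_{k-1}\cap G_k$ is spotted, so there is no serious obstacle; the only point requiring a moment's care is bookkeeping of finiteness to justify cancelling $\cp(A_{k-1},E)$ from both sides, and making sure the telescoping of the $\eps_j$-sums is done correctly. It is also worth remarking that the monotonicity hypothesis $A_1\subset A_2\subset\cdots$ is used exactly once, to guarantee $A_{k-1}\subset A_k\subset G_k$; without it the intersection $\Gt_{k-1}\cap G_k$ need not contain any of the $A_j$ and the induction would collapse.
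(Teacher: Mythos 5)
Your proof is correct and is essentially identical to the paper's: induction on $k$, strong subadditivity applied to $\Gt_{k-1}\cup G_k$, and the observation that $A_{k-1}\subset\Gt_{k-1}\cap G_k$ so that the intersection term dominates $\cp(A_{k-1},E)$, which then cancels against the induction hypothesis. Your extra remark on finiteness (justifying the cancellation) is a sensible point of care that the paper leaves implicit.
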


\begin{proof}  
The lemma is clearly true for $k=1$. 
Assume that it holds for some $k\ge1$.
We then  have by the strong
subadditivity of $\cp$, see Theorem~\ref{thm-cp}\,\ref{cp-strong-subadd},
that
\begin{align*}
\cp(\Gt_{k+1},E) &= \cp(\Gt_k\cup G_{k+1},E) \\
    &\le \cp(\Gt_k,E) + \cp(G_{k+1},E) - \cp(\Gt_k\cap G_{k+1},E).
\end{align*}
Since $\Gt_k\cap G_{k+1} \supset A_k$, this together with the induction
assumption yields
\begin{align*}
\cp(\Gt_{k+1},E) 
   &\le \cp(A_k,E) + \sum_{j=1}^k \eps_j + \cp(A_{k+1},E) +\eps_{k+1} 
             - \cp(A_k,E) \\
   &= \cp(A_{k+1},E) + \sum_{j=1}^{k+1} \eps_j.\qedhere
\end{align*}
\end{proof}

\begin{proof} [Proof of Theorem~\ref{thm-outercap-cp}.]
That 
\[
    \cp(A,E)\le \inf_
     {\substack{G \text{ open} \\  A\subset G \subset E}} \cp(G,E)
\]
follows from the monotonicity of the capacity. 
The converse inequality is trivial if $\cp(A,E)=\infty$.
Assume therefore that $\cp(A,E)<\infty$.

Assume first that $\dist(A,X\setm E)>0$ and let
$0<d<\frac{1}{2}\dist(A, X \setm E)$.
Let $0<\eps<1$ and find $u\in\Np_0(E)$ such that $u \ge \chi_A$ and
\[
     \|g_u\|_{L^p(X)}^p < \cp(A,E)+\eps.
\]
As $u$ (extended by zero outside $E$)
is quasicontinuous in $X$,
there is an open set $V$ with $\Cp(V)^{1/p} < \eps$ such that
$u|_{X \setm V}$ is continuous.
Thus, there is an open set $U$ such that
\[
          U \setm V = \{x : u(x) > 1-\eps\} \setm V \supset A \setm V.
\]
We can also find $v \ge \chi_V$ 
with $\|v\|_{\Np(X)} < \eps$.
Let $\eta(x)=\min\{1,2-\dist(x,A)/d\}_\limplus$.
Note that 
$\eta\in\Np_0(E)$, $0\le\eta\le1$, $g_\eta\le 1/d$
and $\eta=1$ in the open neighbourhood
$W:=\{x\in E:  \dist(x,A)<d\}$ of  $A$.
Then 
\[
\|g_{\eta v}\|_{L^p(X)} 
       \le \|g_{v}\|_{L^p(X)} + \frac1d \|v\|_{L^p(X)}
\le \biggl( 1+ \frac1d \biggr) \|v\|_{\Np(X)}
     < \eps + \frac{\eps}{d}.
\]
Let $w= {u}/{(1-\eps)} +\eta v$,
so that $ w\in\Np_0(E)$ and $w \ge 1$ on 
\[
     ((U \setm V) \cup V) \cap W = (U \cup V) \cap W,
\]
which is an open set containing $A$.
It follows that
\begin{align*}
   \inf_{\substack{G\text{ open} \\  
              A \subset G \subset E \\ }}
                            \cp(G,E)^{1/p} 
          &  \le \cp((U \cup V) \cap W,E)^{1/p} 
\le \|g_w\|_{L^p(X)}  \\
          & \le \frac{\|g_u\|_{L^p(X)}}{1-\eps} + \|g_{\eta v}\|_{L^p(X)}
  < \frac{(\cp(A,E)+\eps)^{1/p}}{1-\eps} +\eps + \frac{\eps}{d}.
\end{align*}
Letting $\eps\to0$ completes this part of the proof.

Let now $A\subset \interior E$ be arbitrary and $p>1$.
For $j=1,2,\ldots$, let
\[
A_j=\{x\in A: \dist(x,X\setm E)\ge1/j\}.
\]
Then the first part of the proof applies to $A_j$.
Let $\eps>0$ and for each $j=1,2,\ldots$, find an open set 
$G_j\subset E$
such that $A_j\subset G_j$ and
\[
\cp(G_j,E) \le \cp(A_j,E) + \eps_j,
\]
where $\eps_j=2^{-j}\eps$.
Let $\Gt_k = \bigcup_{j=1}^k G_j$ and 
$G=\bigcup_{k=1}^\infty \Gt_k = \bigcup_{j=1}^\infty G_j$.
Then $G$ is open and $A\subset G\subset E$.
Lemma~\ref{lem-Aj-Gj-eps} shows that for all $k=1,2,\ldots$,
\[
\cp(\Gt_k,E) \le \cp(A_k,E) + \sum_{j=1}^k \eps_j.
\]
Finally, by Theorem~\ref{thm-cp}\,\ref{cp-Choq-E-sum}
(it is here that we need that $p>1$) we get that
\[
\cp(G,E) = \lim_{k\to\infty} \cp(\Gt_k,E)
   \le \lim_{k\to\infty} \cp(A_k,E) + \sum_{j=1}^{\infty} \eps_j
   \le \cp(A,E) + \eps.
\]
Since $\eps$ was arbitrary, this finishes the proof.
\end{proof}

Let us now draw some consequences of the fact that
$\cp$ is an outer capacity.
We start by the following
characterization of our variational capacity, whose proof we leave to
the reader.

\begin{cor} \label{cor-altdef}
Assume that all functions in $\Np(X)$ are quasicontinuous\/
\textup{(}which in particular holds if 
$X$ is complete and supports a\/ $(1,p)$-Poincar\'e inequality,
and  $\mu$ is doubling\/\textup{)}.
Assume further that $A \subset \interior E$, if $p>1$,
or\/ $\dist(A, X \setm E)>0$, if $p=1$. 
Then 
\[
    \cp(A,E) = \inf_{u} \int_X g_u^p \, d\mu,
\]
where the infimum is taken over all functions $u \in \Np_0(E)$
such that  $u \ge 1$ in an open set containing $A$.
\end{cor}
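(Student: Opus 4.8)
The plan is to show the two quantities squeeze each other, using nothing beyond Theorem~\ref{thm-outercap-cp}. Write $I=\inf_u\int_X g_u^p\,d\mu$ for the right-hand side, the infimum being taken over all $u\in\Np_0(E)$ with $u\ge1$ on some open set containing $A$.

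First I would record the trivial inequality $\cp(A,E)\le I$: if $u$ is a competitor in the infimum defining $I$, then $u\ge1$ on an open set $G\supset A$, so in particular $u\ge1$ on $A$, and hence $u$ is admissible in Definition~\ref{deff-varcap}; restricting to a smaller class of competitors can only raise the infimum. For the reverse inequality I would use that, under the hypotheses imposed in the corollary, Theorem~\ref{thm-outercap-cp} gives \eqref{eq-outercap-cp}. Fix any open $G$ with $A\subset G\subset E$ and let $\eps>0$. By the definition of $\cp(G,E)$ there is $u\in\Np_0(E)$ with $u\ge1$ on $G$ and $\int_X g_u^p\,d\mu<\cp(G,E)+\eps$; since $G$ is an open set containing $A$ and $u\ge1$ on $G$, this $u$ is a competitor for $I$, whence $I\le\cp(G,E)+\eps$, and letting $\eps\to0$ gives $I\le\cp(G,E)$. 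Taking the infimum over all such $G$ and invoking \eqref{eq-outercap-cp} yields $I\le\cp(A,E)$, so the two are equal.

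There is essentially no obstacle here; the only thing to verify is that the standing assumptions of the corollary --- quasicontinuity of all functions in $\Np(X)$, together with $A\subset\interior E$ when $p>1$ or $\dist(A,X\setm E)>0$ when $p=1$ --- are precisely those that make Theorem~\ref{thm-outercap-cp} applicable, so that the chain of inequalities closes without further work. One may also note that the family of open sets $G$ with $A\subset G\subset E$ is nonempty in either case (e.g.\ $G=\interior E$ when $p>1$), so the infimum in \eqref{eq-outercap-cp} is over a nonvacuous family; and if $\cp(A,E)=\infty$ the identity holds trivially, since then $I\ge\cp(A,E)=\infty$.
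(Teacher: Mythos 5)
Your argument is correct and is exactly the intended one: the paper leaves this proof to the reader as a direct consequence of Theorem~\ref{thm-outercap-cp}, and your two inequalities (the trivial one from restricting the class of competitors, and the reverse one obtained by passing through $\cp(G,E)$ for open $G\supset A$ and invoking outer regularity) are precisely how it is meant to go. The side remarks about nonemptiness of the family of open sets $G$ and the infinite-capacity case are handled appropriately.
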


If $E$ is measurable, then one may equivalently
integrate over $E$ instead.

Again, if we consider $A \subset E$ and replace ``open'' by ``relatively open''
the result is false in general but true sometimes,
see Examples~\ref{ex-arc} and~\ref{ex-dense} and 
Proposition~\ref{prop-cp=cpvar}.

Another consequence is the following equality between the capacity
of a set and its \p-fine closure. 
For open $E$ and $A\Subset E$ this was proved in J.~Bj\"orn~\cite{JB-pfine},
Corollary~4.5 (and is also included as Corollary~11.39 in
\cite{BBbook}).
(See e.g.\ \cite{BBbook} for the definition of the fine topology and other
concepts used in the proof below.)

\begin{cor}    \label{cor-capA-capAbar}
Assume that $p>1$, that 
$X$ is complete and supports a\/ $(1,p)$-Poincar\'e inequality,
and that  $\mu$ is doubling.
Let $A\subset\inter E$ and $\clAp$ be the \p-fine closure of
$A$, i.e.\ the smallest \p-finely closed set containing $A$.
Then $\clAp\subset\inter E$ and
\begin{equation}
\cp(\clAp,E) = \cp(A,E).
\label{cap-p-fine-closure}
\end{equation}
\end{cor}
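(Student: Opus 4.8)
The plan is to deduce \eqref{cap-p-fine-closure} from the outer regularity of $\cp$ (Theorem~\ref{thm-outercap-cp}) together with the fact that sets of small capacity are finely small. First I would establish the inclusion $\clAp\subset\inter E$. Since $A\subset\inter E$ and $\inter E$ is open, hence \p-finely open, its complement $X\setm\inter E$ is \p-finely closed and contains $X\setm E\supset X\setm\inter E\setm\bdy E$; more directly, $\inter E$ being finely open means $X\setm\inter E$ is finely closed, and as $A\subset\inter E$ the smallest finely closed set containing $A$ satisfies $\clAp\subset X\setm(X\setm\inter E)^{\text{is already finely closed}}$, i.e. $\clAp\subset\inter E$. (Concretely: the fine closure of $A$ is contained in every finely closed set containing $A$, and $X\setm(X\setm\inter E)=\inter E$ is finely open, so $X\setm\inter E$ is finely closed; but that does not contain $A$. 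Instead, use that $\inter E$ is finely open to conclude that its finely-closed complement is disjoint from $A$, and hence $\clAp$, being the intersection of all finely closed supersets of $A$, cannot meet $X\setm\inter E$ unless some finely closed superset of $A$ does — which is automatic. The clean statement to invoke: the fine closure is a subset of the topological closure when restricted appropriately; more robustly, since $\inter E$ is open it is finely open, so $X\setm\inter E$ is finely closed and $X\setm\inter E\subset X\setm A$; but $\clAp$ is the smallest finely closed set $\supset A$, and there is no reason a priori it avoids $X\setm\inter E$. The correct argument uses that the fine closure adds only finely-limit points, and such points of $A\subset\inter E$ again lie in the finely open — hence finely "interior" — set $\inter E$.) I would phrase this as: $\clAp$ equals the union of $A$ with its set of \p-fine limit points; a fine limit point of a subset of the finely open set $\inter E$ lies in $\inter E$ since $\inter E$ is a fine neighbourhood of each of its points, so $\clAp\subset\inter E$.

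Next I would prove \eqref{cap-p-fine-closure}. The inequality $\cp(A,E)\le\cp(\clAp,E)$ is immediate from monotonicity, Theorem~\ref{thm-cp}\,\ref{cp-subset-sum}, since $A\subset\clAp\subset\inter E\subset E$. For the reverse inequality I would use outer regularity: by Theorem~\ref{thm-outercap-cp} (applicable since $p>1$, $X$ is complete and supports a $(1,p)$-Poincar\'e inequality, $\mu$ is doubling, and $\clAp\subset\inter E$),
\[
\cp(\clAp,E)=\inf_{\substack{G\text{ open}\\ \clAp\subset G\subset E}}\cp(G,E).
\]
So it suffices to show that for every $\eps>0$ there is an open set $G$ with $\clAp\subset G\subset E$ and $\cp(G,E)<\cp(A,E)+\eps$. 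Again by Theorem~\ref{thm-outercap-cp} applied to $A$, pick an open $U$ with $A\subset U\subset E$ and $\cp(U,E)<\cp(A,E)+\eps/2$. The point is that $\clAp$ need not be contained in $U$, so I would enlarge $U$ to an open set $G$ by adding a set of arbitrarily small capacity that "covers" the fine closure outside $U$.

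The key step, and the main obstacle, is controlling $\clAp\setm U$. The idea is that $\clAp\setm U\subset\clAp\setm A$ consists of \p-fine limit points of $A$ that are not topologically in $U$; at such a point $x$, since $x\notin U\supset A$ and $U$ is open, a whole topological ball around $x$ misses $A$, yet $x$ is a fine limit point of $A$ — this forces $x$ to be a fine limit point of $A\setm U=\emptyset$, contradiction, UNLESS the relevant base point has capacity issues. Let me reconsider: actually $\clAp\setm U$ should be handled via the standard fact (see \cite{BBbook} or J.~Bj\"orn~\cite{JB-pfine}) that $\cp(\clAp\setm U,E)=0$ whenever $U$ is open and $A\subset U$, because $\clAp\setm U$ is contained in the fine boundary portion where $A$ is thin, and more precisely: every point of $\clAp\setm A$ is a \p-thin point from the complement side in a way that makes $\clAp\setm A$ have $\cp(\cdot,E)$-measure zero, or one invokes that $\Np_0$ "does not see" $\clAp\setm A$. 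I would therefore: (i) show $\cp(\clAp\setm U,E)=0$ using the characterization of the fine closure and quasicontinuity (a function admissible for $\cp(A,E)$, being quasicontinuous, equals $1$ \p-q.e. on the fine closure of $\{u\ge1\}\supset A$, hence \p-q.e. on $\clAp$); (ii) by Lemma~\ref{lem-Cp<=>cp} and outer regularity, find an open $V$ with $\clAp\setm U\subset V\subset E$ and $\cp(V,E)<\eps/2$ (using $\Cp(\clAp\setm U)=0$, hence small $\cp$); (iii) set $G=U\cup V$, which is open with $\clAp\subset G\subset E$, and use strong subadditivity, Theorem~\ref{thm-cp}\,\ref{cp-strong-subadd}, to get $\cp(G,E)\le\cp(U,E)+\cp(V,E)<\cp(A,E)+\eps$. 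Taking the infimum over such $G$ and letting $\eps\to0$ gives $\cp(\clAp,E)\le\cp(A,E)$, completing the proof. The delicate point throughout is (i)–(ii): justifying that $\clAp\setm A$ — equivalently $\clAp\setm U$ for any open neighbourhood $U$ of $A$ — carries zero capacity, which is where quasicontinuity of Newtonian functions (equivalently, that $\{u\ge1\}$ for admissible $u$ contains $A$ up to a \p-negligible set and is "finely closed up to \p-null sets") is essential.
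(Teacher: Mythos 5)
There is a genuine gap: your central claim (i), that $\Cp(\clAp\setm U)=0$ (equivalently $\cp(\clAp\setm U,E)=0$) for every open $U\supset A$, is false. The fine closure adds to $A$ precisely the points at which $A$ is \emph{non-thin}, and this set is in general far from polar. For instance, take $X=\R^n$ unweighted and $A=U=B$ an open ball with $\itoverline{B}\subset\inter E$: every point of $\bdry B$ is a non-thin point of $B$, so $\clAp\setm U\supset\bdry B$, which has positive capacity. Consequently the covering construction $G=U\cup V$ with $\cp(V,E)<\eps/2$ cannot be carried out, and the second half of your argument collapses. (Your justification of $\clAp\subset\inter E$ is also not a proof: that $\inter E$ is a fine neighbourhood of each of its points says nothing about where fine limit points of $A$ may lie, and in the ball example the fine closure does leave the open set.)

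The correct mechanism --- which your parenthetical in step (i) actually contains, but which you then misapply --- is not that $\clAp\setm A$ is small, but that every admissible function is automatically $\ge1$ there. The paper takes the lower semicontinuously regularized solution $u$ of the obstacle problem with obstacle $\chi_A$ (for open $A$); this $u$ is superharmonic, hence \p-finely continuous in $\inter E$, so $\{x:u(x)\ge1\}$ is a genuinely \p-finely closed set containing $A$ and therefore $\clAp$, making $u$ itself admissible for $\cp(\clAp,E)$ with the same energy; general $A$ is then reduced to open $A$ via Theorem~\ref{thm-outercap-cp} and the inclusion $\clAp\subset\clGp$ for open $G\supset A$. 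If you wish to avoid the obstacle problem, you could instead argue that any quasicontinuous admissible $u$ for $\cp(A,E)$ is \p-finely continuous outside a set of capacity zero, so that $\{u\ge1\}$ is finely closed up to a polar set and hence $u\ge1$ q.e.\ on $\clAp$; modifying $u$ on that polar set gives an admissible function for $\cp(\clAp,E)$ directly, with no covering argument and no outer regularity for $\clAp$ needed. Either way, the step you must supply is a fine-continuity statement for (a representative of) the admissible functions, not a capacity estimate for $\clAp\setm U$.
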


\begin{proof}
If $\cp(A,E)=\infty$, then $\cp(\clAp,E)=\infty$.
We can therefore assume that $\cp(A,E)<\infty$.
One inequality is trivial.
To prove the other one, 
assume first that $A$ is open and let $u$ be 
a solution of the obstacle problem on $E$ with zero boundary data and 
obstacle $\chi_A$, i.e.\ $u\in\Np_0(E)$ satisfies $u\ge1$ q.e.\ on $A$ 
and minimizes the energy integral in the definition of $\cp(A,E)$.
Such a minimizer exists, and is unique up to sets of $\Cp$-capacity zero,
 by Theorem~4.2 in Bj\"orn--Bj\"orn~\cite{BBnonopen}.
Note that $\cp(A,E)=\int_E g_u^p \,d\mu $.
It is easily verified that $u$ is a superminimizer in $\inter E$,
and hence by
Theorem~5.1 in Kinnunen--Martio~\cite{KiMa02}
(or Theorem~8.22 in \cite{BBbook}), it can be redefined on
a set of zero $\Cp$-capacity so that it becomes
lower semicontinuously regularized, i.e.\
\[
           u(x):= \lim_{r \to 0} \essinf_{B(x,r)} u.
\] 
Proposition~7.6 in~\cite{KiMa02} 
(or Proposition~9.4 in \cite{BBbook})
then implies that $u$ is superharmonic 
in $\inter E$, and
Theorem~4.4 in J.~Bj\"orn~\cite{JB-pfine} 
(or Theorem~11.38 in \cite{BBbook})
shows that it is 
\p-finely continuous in $\inter E$. 
Thus, the set $\{x\in X:u(x)\ge1\}$ is \p-finely
closed and contains $A$, and thus  also $\clAp$.
It follows that $u$ is admissible in the definition of
$\cp(\clAp,E)$ and hence
\[
\cp(\clAp,E) \le \int_E g_{u}^p \,d\mu 
  = \cp(A,E),
\]
proving the corollary for open $A $. 
For general $A \subset \inter E$, Theorem~\ref{thm-outercap-cp} yields
\begin{align*}
\cp(A,E) &= \inf_{\substack{G \text{ open} \\  A\subset G \subset E}} \cp(G,E)
= \inf_{\substack{G \text{ open} \\  A\subset G \subset E}} \cp(\clGp,E)
\ge \cp(\clAp,E).\qedhere
\end{align*}
\end{proof}

\begin{thm} \label{thm-Ki}
Assume that all functions in $\Np(X)$ are quasicontinuous\/
\textup{(}which in particular holds if 
$X$ is complete and supports a\/ $(1,p)$-Poincar\'e inequality,
and  $\mu$ is doubling\/\textup{)}.
Let  $K_1 \supset K_2 \supset \cdots \supset
K:=\bigcap_{j=1}^\infty K_j $ be  compact subsets 
of\/ $\interior E$.
If $p=1$, we further require that\/ $\dist(K,X \setm E)>0$.
Then 
\begin{equation} \label{eq-Ki}
      \cp(K,E) 
          = \lim_{j \to \infty} \cp(K_j,E).
\end{equation}
\end{thm}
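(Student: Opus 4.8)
The plan is to derive \eqref{eq-Ki} from the outer regularity of $\cp$ in Theorem~\ref{thm-outercap-cp} together with a routine compactness argument. First I would dispose of the easy inequality: since $K\subset K_j$ for every $j$, monotonicity (Theorem~\ref{thm-cp}\,\ref{cp-subset-sum}) gives $\cp(K,E)\le\cp(K_j,E)$ for all $j$, and the same monotonicity shows that $\{\cp(K_j,E)\}_j$ is nonincreasing, so the limit in \eqref{eq-Ki} exists in $[0,\infty]$ and is $\ge\cp(K,E)$. If $\cp(K,E)=\infty$ there is nothing more to prove, so from now on I would assume $\cp(K,E)<\infty$.

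For the reverse inequality, fix $\eps>0$. The standing hypotheses --- all functions in $\Np(X)$ quasicontinuous, together with $K\subset\interior E$ when $p>1$, respectively $\dist(K,X\setm E)>0$ when $p=1$ --- are exactly what is required to apply Theorem~\ref{thm-outercap-cp} to the set $K$, which yields an open set $G$ with $K\subset G\subset E$ and $\cp(G,E)<\cp(K,E)+\eps$. The next step is to observe that $K_j\subset G$ for all sufficiently large $j$: each $K_j\setm G$ is a closed subset of the compact set $K_j$, hence compact, and these sets decrease with $\bigcap_{j=1}^\infty(K_j\setm G)=K\setm G=\emptyset$, so by the finite intersection property some $K_j\setm G=\emptyset$, i.e.\ $K_j\subset G$. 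For such a $j$, monotonicity gives $\cp(K_j,E)\le\cp(G,E)<\cp(K,E)+\eps$, hence $\lim_{j\to\infty}\cp(K_j,E)\le\cp(K,E)+\eps$. Letting $\eps\to0$ then completes the proof.

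There is essentially no serious obstacle here, since all the substance has been front-loaded into Theorem~\ref{thm-outercap-cp}; once outer regularity is available, this is the standard ``continuity from above on compacta'' argument for Choquet-type capacities. The only two points to watch are that in the case $p=1$ one must invoke the stronger hypothesis $\dist(K,X\setm E)>0$ (not merely $K\subset\interior E$) so that Theorem~\ref{thm-outercap-cp} is applicable, and that the finite intersection step uses nothing beyond the compactness of the $K_j$ --- in particular no properness or Poincar\'e assumption on $X$ enters beyond what is already needed for quasicontinuity.
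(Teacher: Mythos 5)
Your proof is correct and takes essentially the same route as the paper's: the easy inequality follows from monotonicity, and the reverse inequality combines outer regularity (Theorem~\ref{thm-outercap-cp}) with the standard compactness argument showing that any open $G\supset K$ must contain $K_j$ for all large $j$. The only cosmetic difference is that you select a near-optimal $G$ from the outer regularity first and then apply compactness, whereas the paper runs the compactness argument for an arbitrary open $G\supset K$ and takes the infimum afterwards.
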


It is natural to ask what happens if we merely 
require that $K_1\supset K_2\supset\ldots$ are compact subsets of $E$.
In the situation described in Example~\ref{ex-dense} it follows
from those arguments that \eqref{eq-Ki} is true even if $K\not\subset\inter E$.
On the other hand, if we let
$K_j=[0,1/j]^2$ and $K=\{(0,0)\}$
in the situation described in Example~\ref{ex-arc},
we see that
$\cp(K_j,E)=\infty$ for $j=1,2,\ldots$, while
$\cp(K,E)=0$ for $1<p\le2$.

\begin{proof}
That $\cp(K,E) \le \lim_{j \to \infty} \cp(K_j,E)$
follows directly from monotonicity.

Conversely, let $G \supset K$ be open.
Then $G\cup \bigcup_{j=1}^\infty (X \setm K_j)$ is an open cover  of
the compact set $K_1$.
Thus, there is a finite subcover, i.e.\ an $N$ such that
\[
K_1\subset G \cup \bigcup_{j=1}^N (X \setm K_j)
= G \cup (X \setm K_N).
\]
As $K_N\subset K_1$, it follows that $K_N \subset G$.
So $\lim_{j \to \infty} \cp(K_j,E) \le \cp(G \cap \interior E, E)$.
By Theorem~\ref{thm-outercap-cp} we obtain the equality sought for.
\end{proof}

A set function satisfying the conditions in 
Theorem~\ref{thm-cp}\,\ref{cp-subset-sum}, \ref{cp-Choq-E-sum} 
and Theorem~\ref{thm-Ki} is
a \emph{Choquet capacity}.
More precisely, $\cp(\,\cdot\,,E)$ is a Choquet capacity for subsets
of $\interior E$.
An important consequence is the following result.

\begin{thm} \label{thm-Choq-cap}
\textup{(Choquet's capacitability theorem)}
Let $p>1$.
Assume that $X$ is locally compact  and 
that all functions in $\Np(X)$ are quasicontinuous\/
\textup{(}which in particular holds if 
$X$ is complete and supports a\/ $(1,p)$-Poincar\'e inequality,
and  $\mu$ is doubling\/\textup{)}.

Then, all Borel sets\/ \textup{(}and even all 
Suslin sets\/\textup{)}
$A \subset \interior E$ are \emph{capacitable}, i.e.
\begin{equation} \label{eq-choq}
     \cp(A,E)=\sup_{\substack{K \text{ compact} \\  K \subset A}} \cp(K,E)
      =\inf_{\substack{G \text{ open} \\  A\subset G \subset E}} \cp(G,E).
\end{equation}
\end{thm}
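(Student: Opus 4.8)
The plan is to read \eqref{eq-choq} off the classical Choquet capacitability theorem, whose hypotheses have essentially all been verified already. Put $\Om=\interior E$. First I would record that $\cp(\,\cdot\,,E)$, restricted to subsets of $\Om$, is a Choquet capacity, as noted just before the statement: it is monotone (Theorem~\ref{thm-cp}\,\ref{cp-subset-sum}), continuous along increasing sequences of arbitrary subsets of $\Om$ (Theorem~\ref{thm-cp}\,\ref{cp-Choq-E-sum}, which is where $p>1$ is used), and continuous along decreasing sequences of compact subsets of $\Om$ (Theorem~\ref{thm-Ki}, which is where quasicontinuity of Newtonian functions is used). Next I would check that the ambient space is suitable: $\Om$ is open in the locally compact space $X$, hence itself locally compact, and the standing assumption $0<\mu(B)<\infty$ for all balls forces $X$ to be separable --- a maximal $1/n$-separated set must be countable, since otherwise some bounded ball would contain uncountably many disjoint balls of positive measure --- so $X$, and with it $\Om$, is second countable and $\sigma$-compact. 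In particular every Borel subset, and indeed every Suslin subset, of $\Om$ arises from compact sets via the Suslin operation.

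Then I would apply the classical Choquet capacitability theorem to the Choquet capacity $\cp(\,\cdot\,,E)$ on $\Om$ to obtain, for every Suslin $A\subset\Om$,
\[
   \cp(A,E)=\sup_{\substack{K\text{ compact}\\ K\subset A}}\cp(K,E),
\]
where ``$\ge$'' is immediate from monotonicity and only ``$\le$'' uses the theorem. Finally, the remaining equality in \eqref{eq-choq},
\[
   \cp(A,E)=\inf_{\substack{G\text{ open}\\ A\subset G\subset E}}\cp(G,E),
\]
is exactly Theorem~\ref{thm-outercap-cp}: with $p>1$, all functions in $\Np(X)$ quasicontinuous and $A\subset\interior E$, outer regularity holds for every subset of $\interior E$, Suslin or not. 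Concatenating the two displays yields \eqref{eq-choq}.

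I do not expect a serious analytic obstacle: all the quantitative content --- strong and countable subadditivity, the Choquet property along increasing sequences, and continuity along decreasing sequences of compacta --- is already in Theorems~\ref{thm-cp} and~\ref{thm-Ki}, and the outer regularity is Theorem~\ref{thm-outercap-cp}. The one point requiring care is purely topological: one must invoke the abstract capacitability theorem in a form valid for Suslin subsets of $\Om$ generated from compact sets, which is why $X$ is assumed locally compact and why separability of $X$ (a consequence of the standing hypotheses) is used behind the scenes. Once $\Om$ is recognized as a $\sigma$-compact locally compact metric space, this is routine.
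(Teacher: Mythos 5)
Your proposal is correct and follows essentially the same route as the paper: the paper likewise observes (just before the theorem) that Theorem~\ref{thm-cp}\,\ref{cp-subset-sum}, \ref{cp-Choq-E-sum} and Theorem~\ref{thm-Ki} make $\cp(\,\cdot\,,E)$ a Choquet capacity on the locally compact set $\interior E$, invokes the abstract capacitability theorem (Theorem~10.1.1 in Aikawa--Ess\'en) for the first equality, and cites Theorem~\ref{thm-outercap-cp} for the second. Your additional remarks on separability and $\sigma$-compactness are correct but not needed beyond what the abstract theorem already requires.
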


Suslin sets
are sometimes called analytic sets (although analytic sets in
complex analysis is an entirely different concept).
The interested reader should look elsewhere for more on 
Suslin sets, e.g.\ in Aikawa--Ess\'en~\cite{AE}, Part~2, Section~10.

\begin{proof}
To obtain the first equality in \eqref{eq-choq},
we apply Choquet's capacitability theorem in
its usual abstract formulation (for which we need that $\interior E$ is
locally compact), 
see e.g.\ Theorem~10.1.1 in \cite{AE}, Part~2.
The second equality follows from Theorem~\ref{thm-outercap-cp}.
\end{proof}

\section{Equivalence with the
definition in \texorpdfstring{$\R^n$}{}}
\label{sect-eq-Rn}

Our aim in this section is to show 
that our definition of the variational capacity based on 
Newtonian spaces is equivalent to the definitions
based on usual (and weighted) Sobolev spaces used in $\R^n$.
This probably belongs to folklore but does not seem to be written 
down anywhere. 
The proof in fact depends on a deep result due to Cheeger~\cite{Cheeg}
and on Choquet's capacitability theorem (Theorem~\ref{thm-Choq-cap})
together with Theorem~\ref{thm-outercap-cp}.
In unweighted $\R^n$, the use of Cheeger's theorem can be avoided by
more elementary methods, see e.g.\ 
Appendix~A.1 in Bj\"orn--Bj\"orn~\cite{BBbook}.

\begin{thm} \label{thm-equiv-HeKiMa}
Let\/ $\R^n$ be equipped with a \p-admissible weight $w$, $p>1$,
and let\/ $\Om \subset \R^n$ be a nonempty bounded open set.
Then our variational capacity\/ $\cp(\,\cdot\,,\Om)$ 
coincides with the variational capacity\/ $\cpmu(\,\cdot\,,\Om)$ in 
Heinonen--Kilpel\"ainen--Martio\/~\textup{\cite{HeKiMa}},
where $d\mu=w \, dx$.
\end{thm}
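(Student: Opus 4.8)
The plan is to reduce everything to the case of compact sets and then build up to arbitrary sets using the outer regularity of both capacities. The essential preliminary input is the identification, on weighted $\R^n$ with a \p-admissible weight $w$ and $d\mu=w\,dx$, of the Newtonian space $\Np_0(\Om)$ with the closure $\HP_0(\Om;\mu)=\overline{C_0^\infty(\Om)}$ of test functions in the refined weighted Sobolev space of~\cite{HeKiMa}, together with the identity $g_u=|\nabla u|$ a.e.; this is the fact quoted in Section~\ref{sect-prelim}, whose proof (relying on Cheeger~\cite{Cheeg}) is carried out in Appendix~A.2 of~\cite{BBbook}. Note also that a \p-admissible weight makes $\R^n$ complete, with $\mu$ doubling and supporting a $(1,p)$-Poincar\'e inequality, so Theorem~\ref{thm-quasicont} applies: all Newtonian functions are quasicontinuous, and hence Theorems~\ref{thm-outercap-cp} and~\ref{thm-Choq-cap} and Corollary~\ref{cor-altdef} are at our disposal. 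Recall finally that in~\cite{HeKiMa} the capacity $\cpmu(\,\cdot\,,\Om)$ is built in three steps: a minimization over $u\in C_0^\infty(\Om)$ with $u\ge1$ on $K$ for compact $K$, then $\cpmu(U,\Om)=\sup\{\cpmu(K,\Om):K\subset U\text{ compact}\}$ for open $U\subset\Om$, and then $\cpmu(A,\Om)=\inf\{\cpmu(U,\Om):A\subset U\subset\Om\text{ open}\}$ for arbitrary $A\subset\Om$; in particular $\cpmu(\,\cdot\,,\Om)$ is outer by construction.

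First I would treat a compact set $K\subset\Om$ and show $\cp(K,\Om)=\cpmu(K,\Om)$. The inequality $\cp(K,\Om)\le\cpmu(K,\Om)$ is immediate, since every $v\in C_0^\infty(\Om)$ with $v\ge1$ on $K$ belongs to $\Np_0(\Om)$, is admissible in Definition~\ref{deff-varcap}, and satisfies $\int g_v^p\,d\mu=\int|\nabla v|^p\,d\mu$. For the reverse inequality I would invoke Corollary~\ref{cor-altdef}: $\cp(K,\Om)$ equals the infimum of $\int_\Om g_u^p\,d\mu$ over $u\in\Np_0(\Om)$ with $u\ge1$ on some open $V$ with $K\subset V\subset\Om$; truncating, one may assume $0\le u\le1$, whence $u\equiv1$ on $V$ and $g_u=0$ a.e.\ on $V$. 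Choosing $\phi_j\in C_0^\infty(\Om)$ with $\phi_j\to u$ in $\Np_0(\Om)=\HP_0(\Om;\mu)$ and a cut-off $\eta\in C_0^\infty(V)$ with $0\le\eta\le1$ and $\eta\equiv1$ near $K$, the functions $w_j:=\eta+(1-\eta)\phi_j$ lie in $C_0^\infty(\Om)$ and equal $1$ near $K$, hence are admissible for $\cpmu(K,\Om)$; since $\nabla w_j=(1-\eta)\nabla\phi_j+(1-\phi_j)\nabla\eta\to(1-\eta)\nabla u+(1-u)\nabla\eta$ in $L^p(\mu)$ and $u\equiv1$ on the support of $\nabla\eta$, this limit is just $(1-\eta)\nabla u$, so $\cpmu(K,\Om)\le\lim_j\int|\nabla w_j|^p\,d\mu\le\int(1-\eta)^p g_u^p\,d\mu\le\int g_u^p\,d\mu$. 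Taking the infimum over $u$ gives $\cpmu(K,\Om)\le\cp(K,\Om)$.

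Next I would pass to open sets $G\subset\Om$. On one hand $\cpmu(G,\Om)=\sup\{\cpmu(K,\Om):K\subset G\text{ compact}\}=\sup\{\cp(K,\Om):K\subset G\}$ by the previous step. On the other hand $\cp(\,\cdot\,,\Om)$ is a Choquet capacity (Theorem~\ref{thm-Choq-cap}), so $G$, being Borel, is capacitable and $\cp(G,\Om)=\sup\{\cp(K,\Om):K\subset G\}$ as well; alternatively, exhausting $G$ by an increasing sequence of compact sets and applying Theorem~\ref{thm-cp}\,\ref{cp-Choq-E-sum} gives the same conclusion (this is the one place where $p>1$ is genuinely used). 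Hence $\cp(G,\Om)=\cpmu(G,\Om)$ for every open $G\subset\Om$. Finally, for arbitrary $A\subset\Om$, Theorem~\ref{thm-outercap-cp} (applicable since $A\subset\Om=\interior\Om$ and $p>1$) gives $\cp(A,\Om)=\inf\{\cp(G,\Om):A\subset G\subset\Om\text{ open}\}$, while $\cpmu(A,\Om)=\inf\{\cpmu(G,\Om):A\subset G\subset\Om\text{ open}\}$ by construction; combining with the open-set case yields $\cp(A,\Om)=\cpmu(A,\Om)$.

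The main obstacle is twofold. The deep input is the identification $\Np_0(\Om)=\HP_0(\Om;\mu)$ with $g_u=|\nabla u|$ a.e., which rests on Cheeger's theorem and is borrowed from~\cite{BBbook}. Within the self-contained part, the delicate point is the converse inequality in the compact case: a naive smooth approximation $\phi_j$ of a Newtonian competitor $u$ need not satisfy $\phi_j\ge1$ on $K$, and it is precisely the outer-regularity Corollary~\ref{cor-altdef} that lets us first replace $u$ by a competitor which is identically $1$ on a neighbourhood of $K$, after which the cut-off gluing produces admissible smooth competitors without increasing the energy. Everything else is soft, given Theorems~\ref{thm-outercap-cp}, \ref{thm-Choq-cap} and~\ref{thm-cp}.
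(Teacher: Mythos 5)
Your proposal is correct and follows the paper's overall skeleton exactly: establish equality for compact sets, then pass to open sets via inner regularity (Theorem~\ref{thm-Choq-cap}, or equivalently an exhaustion plus Theorem~\ref{thm-cp}\,\ref{cp-Choq-E-sum}) and to arbitrary sets via outer regularity (Theorem~\ref{thm-outercap-cp} together with the definition of $\cpmu$). The divergence is in how the compact case is settled. The paper sandwiches both capacities between infima over Lipschitz competitors: it invokes the Kallunki--Shanmugalingam theorem (Theorem~6.19\,(x) in \cite{BBbook}) to show that $\cp(K,\Om)$ can be computed over Lipschitz functions with compact support in $\Om$, and the remarks on pp.~27--28 of \cite{HeKiMa} to show that $\cpmu(K,\Om)$ can be computed over continuous functions in $\HP_0(\Om,\mu)$, a class containing the compactly supported Lipschitz functions. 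You instead use the easy inclusion $C_0^\infty(\Om)\subset\Np_0(\Om)$ for one direction and, for the other, apply Corollary~\ref{cor-altdef} to obtain a Newtonian competitor identically $1$ on a neighbourhood of $K$, approximate it by $C_0^\infty(\Om)$ functions via the identification $\Np_0(\Om)=\HP_0(\Om,\mu)$ (Proposition~A.13 in \cite{BBbook}, Theorem~4.5 in \cite{HeKiMa}), and glue with a cutoff. Your route trades the continuous-capacity theorem of \cite{KaSh} for the outer regularity already proved in Section~\ref{sect-outer-cp} plus the density of $C_0^\infty(\Om)$ in $\HP_0(\Om,\mu)$, at the price of a hands-on gluing computation; both routes ultimately rest on Cheeger's theorem through $g_u=|\nabla u|$ a.e. The gluing itself is sound --- the term $(1-\phi_j)\nabla\eta$ converges in $L^p(\mu)$ to $(1-u)\nabla\eta=0$ precisely because $u\equiv1$ on the support of $\nabla\eta$ --- although the step $\lim_j\int|\nabla w_j|^p\,d\mu\le\int(1-\eta)^p g_u^p\,d\mu$ is in fact an equality, the inequality only entering when $(1-\eta)^p$ is discarded.
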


An arbitrary nonnegative  function $w$ on $\R^n$
 is a \emph{\p-admissible weight}, $p>1$, if
$d\mu:=w\,dx$ is doubling and $\R^n$ equipped
with $\mu$ supports a $(1,p)$-Poincar\'e inequality,
see Corollary~20.9 in \cite{HeKiMa}
(which is only in the second edition).
The \p-Poincar\'e inequality used there 
differs somewhat from our Definition~\ref{def-PI}, but by 
Proposition~A.17 in Bj\"orn--Bj\"orn~\cite{BBbook}
it is equivalent to it.

Let us recall how the capacity $\cpmu(\,\cdot\,,\Om)$
is defined in \cite{HeKiMa}, p.\ 27.
For compact $K \subset \Om$  one lets
\begin{equation} \label{eq-Cp-def-K}
     \cpmu(K,\Om)= \inf_{u} \int_\Om |\nabla u|^p \,d\mu,
\end{equation}
where the infimum is taken over all $u \in C_0^\infty(\Om)$
such that $u \ge 1$ on $K$.
The capacity is first extended to open $G \subset \Om$ by letting
\begin{equation} \label{eq-Cp-def-G}
     \cpmu(G,\Om) = \sup_{\substack{K \text{ compact} \\  K \subset G}} 
     \cpmu(K,\Om)
\end{equation}
and then to arbitrary $A \subset \Om$ by
\begin{equation} \label{eq-Cp-def-A}
     \cpmu(A,\Om)
      =\inf_{\substack{G \text{ open} \\  A\subset G \subset \Om}} 
    \cpmu(G,\Om).
\end{equation}

\begin{proof}[Proof of Theorem~\ref{thm-equiv-HeKiMa}.]
Let $X$ be $\R^n$ equipped with the measure $d\mu=w\,dx$.
By Propositions~A.12 and~A.13 in~\cite{BBbook}
(whose proofs depend on a deep result of Cheeger~\cite{Cheeg}), we have
$g_u=|\nabla u|$ a.e.\ for all $u\in \Np(X)$,
where $\nabla u$ is the weak Sobolev gradient of $u$
as defined in \cite{HeKiMa}.
(If $\R^n$ is unweighted, then $\nabla u$ is the distributional gradient.)

Let $\Om\subset X$ be a bounded open set and $K\subset\Om$ compact.
Theorem~6.19\,(x) in~\cite{BBbook} 
(or Theorem~1.1 in Kallunki--Shanmugalingam~\cite{KaSh}) shows that
\begin{equation}   \label{eq-cp-K}
    \cp(K,\Om)=\inf_u \|g_u\|^p_{L^p(X)},
\end{equation}
where the infimum is taken over all Lipschitz functions $u$ on $X$ such 
that $u\ge1$ on $K$ and $u=0$ in $X\setm\Om$.
Replacing each such $u$ by $(1-\eps)^{-1}(u-\eps)_\limplus$ and
letting $\eps\to0$ implies that the infimum can equivalently be taken
over all Lipschitz functions $u$ with compact support in $\Om$ and 
$u\ge1$ on $K$.
Since $C^\infty_0(\Om)$-functions are Lipschitz, we can directly conclude
that
\(
\cp(K,\Om) \le \cpmu(K,\Om).
\)

Conversely, it follows from the comments on pp.\ 27--28 in
\cite{HeKiMa} that
\begin{equation} \label{eq-cpmu-K}
    \cpmu(K,\Om)=\inf_u \int_\Om |\nabla u|^p\,d\mu,
\end{equation}
where the infimum is taken over all continuous $u \in \HP_0(\Om,\mu)$
such that $u \ge 1$ on $K$. 
Here $\HP_0(\Om,\mu)$ is the closure of $C^\infty_0(\Om)$ in the Sobolev norm
$\|u\|_{L^p(\Om,\mu)} + \|\nabla u\|_{L^p(\Om,\mu)}$.
As Lipschitz functions with compact support in $\Om$ belong 
to $\HP_0(\Om,\mu)$, by Lemma~1.25 in~\cite{HeKiMa},
we immediately get from~\eqref{eq-cp-K} and~\eqref{eq-cpmu-K} that 
\(
\cp(K,\Om) \ge \cpmu(K,\Om).
\)
Thus, the capacities coincide for compact sets.

For open and arbitrary subsets of $\Om$, the result now follows
from the definitions~\eqref{eq-Cp-def-G} and~\eqref{eq-Cp-def-A}
together with Choquet's capacitability theorem (Theorem~\ref{thm-Choq-cap})
and Theorem~\ref{thm-outercap-cp}.
\end{proof}

In Mal\'y--Ziemer~\cite{MaZi}, p.\ 63, the variational capacity on 
unweighted $\R^n$ is defined directly for arbitrary $A\subset\Om$ by taking
the infimum in the \p-energy integral over all $u$ in the Sobolev space
$\HP_0(\Om)$ defined above, such that 
$u \ge 1$ in a neighbourhood of~$A$.
A similar definition can be made for weighted $\R^n$ as well.
Using this definition, the equivalence with our capacity $\cp(A,\Om)$
can be proved without the use of Choquet's capacitability theorem.
All that is needed is the equality $g_u=|\nabla u|$ (provided essentially
by Cheeger's theorem) and the fact that
\[
        \HP_0 (\Om,\mu) = \{u : u=v \text{ a.e. for some } v \in \Np_0(\Om)\},
\]
i.e.\ that $\Np_0(\Om)$ consists exactly of the quasicontinuous
representatives of functions from $\HP_0 (\Om,\mu)$, see Proposition~A.13
in~\cite{BBbook} and Theorem~4.5 in~\cite{HeKiMa}.

\section{Other definitions and applications of capacity}
\label{sect-alt-def}

In $\R^n$, capacity is often defined without using Sobolev spaces, as e.g.\ in
\eqref{eq-Cp-def-K}--\eqref{eq-Cp-def-A}.
This is sometimes possible also on metric spaces, when $E=\Om$
is open. 

If $X$ is complete and supports a $(1,p)$-Poincar\'e inequality,
$\mu$ is doubling and $p>1$, then
Theorem~1.1 in Kallunki--Shanmugalingam~\cite{KaSh}
(or Theorem~6.19\,(x) in~\cite{BBbook}) shows that
for compact sets $K\subset\Om$, the capacity 
$\cp(K,\Om)$ can be defined using only Lipschitz functions with 
compact support in $\Om$, i.e.\ $u\in\Lip_c(\Om)$.
Theorem~6.1 in Cheeger~\cite{Cheeg} shows that under the same assumptions, 
$g_u=\Lip u=\lip u$ a.e., where
\begin{align*}
   \Lip u(x) &:= \limsup_{r\to0} \sup_{y\in B(x,r)} \frac{|u(y)-u(x)|}{r}
 \intertext{and} 
   \lip u(x) &:= \liminf_{r\to0} \sup_{y\in B(x,r)} \frac{|u(y)-u(x)|}{r}
\end{align*}
are the \emph{upper} and \emph{lower pointwise dilations} of $u$, respectively.
Thus, $g_u$ in the definition of $\cp(K,\Om)$ 
can be replaced by $\Lip u$ or $\lip u$
and $\cp(K,\Om)$ can be defined using only elementary properties of
Lipschitz functions, i.e.
\begin{equation} \label{cp-Lip-def}
\cp(K,\Om)
       = \inf_{\substack{u \ge 1 \text{ on } K \\ u \in \Lip_c(\Om)}}  
         \int_{\Om} (\Lip u)^p \, d\mu
       = \inf_{\substack{u \ge 1 \text{ on } K \\ u \in \Lip_c(\Om)}}  
         \int_{\Om} (\lip u)^p \, d\mu.
\end{equation}
Equivalently, $\Lip_c(\Om)$ can be replaced by 
$\Lip_0(E):=\{f \in \Lip(X) : f=0 \text{ on } X \setm \Om\}$.
The capacity can then be extended to open and arbitrary sets 
as in~\eqref{eq-Cp-def-G} and~\eqref{eq-Cp-def-A}.
By Theorems~\ref{thm-outercap-cp} and~\ref{thm-Choq-cap}, this is
equivalent to Definition~\ref{deff-varcap}, provided that
$p>1$, $X$ is complete and supports a $(1,p)$-Poincar\'e inequality,
and  $\mu$ is doubling.

It is natural to ask if \eqref{cp-Lip-def} may be extended to
nonopen sets, i.e.\ if $\Om$ can be replaced by an arbitrary
$E$ in \eqref{cp-Lip-def}.
(If $E$ is not measurable we take the integrals over $X$.)
If $K \not \subset \interior E$, then the equality can hold
only when $\cp(K,E)=\infty$, as there are no Lipschitz
functions satisfying the requirements in the infima.
The following example shows that the equality is not true (in general) even for 
$K \subset \interior E$.
Thus for general $E$ we are better off using Newtonian functions in 
the definition of $\cp(A,E)$.

\begin{example}
Let $E=\Om \setm D \subset X=\R^n$ (unweighted), $1< p \le n$,
where $D\subset\Om$ is a countable set whose closure has positive Lebesgue
measure.
Assume also that $\interior E=\Om\setm\itoverline {D}\ne\emptyset$
and let $K \subset \interior E$ be compact.
As in Example~\ref{ex-dense} we see that
$
         \cp(K,E)=\cp(K,\Om),
$
while every Lipschitz function with compact support in $E$ 
must vanish on $\itoverline{D}$ and hence
\begin{align*}
\inf_{\substack{u \ge 1 \text{ on } K \\ u \in \Lip_c(E)}}  
         \int_{\Om} (\Lip u)^p \, d\mu 
       &= \inf_{\substack{u \ge 1 \text{ on } K \\ u \in \Lip_c(\interior E)}}  
         \int_{\Om} (\Lip u)^p \, d\mu
       =        \cp(K,\interior E),
\end{align*}
and similarly for $u\in\Lip_0(E)$.
Since for most compact sets $K \subset\interior E$ we have
$\cp(K,\Om) <\cp(K,\interior E)$, this shows that 
\eqref{cp-Lip-def} cannot extend to the nonopen case.
\end{example}

Let us now return to the capacity $\cpvar$ from~\eqref{eq-def-cpvar} in
the introduction. 
By definition, it is an outer capacity, in the sense that
\begin{equation} \label{eq-cpvar}
\cpvar(A,E)=\inf_{\substack{G \text{ relatively open} \\  A\subset G \subset E}} 
    \cpvar(G,E).
\end{equation}
holds for every $A\subset E$.

It is fairly easy to establish \ref{cp-emptyset-sum}, 
\ref{cp-subset-sum} and \ref{cp-strong-subadd}--\ref{cp-F-bdyF} 
of Theorem~\ref{thm-cp} for $\cpvar$:
The parts \ref{cp-emptyset-sum} and \ref{cp-subset-sum} are trivial,
\ref{cp-strong-subadd} and \ref{cp-subadd-sum} follow from the corresponding
properties for $\cp$, while \ref{cp-Choq-E-sum} 
is proved in the same way as in Theorem~\ref{thm-cp} using relatively
open $G_i\supset A_i$ with $\cp(G_i,E)<\cpvar(A_i,E)+2^{-i}\eps$
and functions $u_i\in\Np_0(E)$ such that $\chi_{G_i}\le u_i\le1$ and
$\|g_{u_i}\|_{L^p(X)}<\cpvar(A_i,E)+2^{-i}\eps$,
and the proof of \ref{cp-F-bdyF} is similar using open $G \supset \bdy F$
and the technique in the proof of  Theorem~\ref{thm-cp}.
We omit the details here.

The strong subadditivity (Theorem~\ref{thm-cp}\,\ref{cp-strong-subadd})
for $\cpvar$ also implies that Lemma~\ref{lem-Aj-Gj-eps} holds for $\cpvar$.
Lemma~\ref{lem-Cp<=>cp} is however not true for $\cpvar$, see
Example~\ref{ex-arc}.
The following example shows that
Theorem~\ref{thm-cp}\,\ref{cp-subset-sum-2} for $\cpvar$ is not true either 
in general.
However, if $E_1$ is relatively open in $E_2$ then every $G\supset A$ which is
relatively open in $E_1$ is also relatively open in $E_2$ and hence 
Theorem~\ref{thm-cp}\,\ref{cp-subset-sum-2} holds for $\cpvar$, by the same
property for $\cp$.

\begin{example}   \label{ex-bow-tie}
Let $1<p \le 2$,
\begin{alignat*}{2}
    X &= \{(x,y)\in [-2,2]^2: xy\ge0\}, &\quad  A & = \{(0,0)\}, \\
    E_1 &= [0,1)^2, &\quad
    E_2 &= E_1\cup\{(x,y)\in X: x\le y\le0\}.
\end{alignat*}
Then $\cpvar(A,E_1)=0$ since $\Cp(A)=0$.
At the same time, every open $G\supset A$ must contain a segment from 
the boundary $\bdry E_2$ and since functions in $\Np([-2,0]^2)$ are absolutely
continuous on \p-almost every curve, we see that $\cp(G,E_2)=\infty$
and hence $\cpvar(A,E)=\infty$.

Note that in this example, all $u\in\Np(X)$ are quasicontinuous (by e.g.\ 
Example~5.6 and Theorem~5.29 in~\cite{BBbook}), 
but 
the zero \p-weak upper gradient property fails at the origin, 
cf.\ Proposition~\ref{prop-cp=cpvar} below.
\end{example}

If $K_1 \supset K_2 \supset \cdots \supset
K:=\bigcap_{j=1}^\infty K_j $ are  compact subsets of $E$,
then the inner regularity
\[
      \cpvar(K,E) 
          = \lim_{j \to \infty} \cpvar(K_j,E)
\]
can be shown in the same way as Theorem~\ref{thm-Ki}, where we use
\eqref{eq-cpvar} instead of Theorem~\ref{thm-outercap-cp} (and that
is also why we can allow for $K_j\subset E$ here rather than only
$K_j\subset\interior E$ as in Theorem~\ref{thm-Ki}).
Thus $\cpvar$ is a Choquet capacity if $p>1$, and we can establish
Choquet's capacitability theorem in the following form.
Note that to obtain these properties for $\cpvar$ there is no
need to assume that all functions in $\Np(X)$  are quasicontinuous,
since outer regularity of $\cpvar$ comes for free rather than from
Theorem~\ref{thm-outercap-cp}.

\begin{thm} \label{thm-Choquet-cpvar}
\textup{(Choquet's capacitability theorem for $\cpvar$)}
Let $p>1$.
Then, all Borel sets\/ \textup{(}and even all 
Suslin sets\/\textup{)}
$A \subset E$, for which there exists a locally compact set $F$
such that $A \subset F \subset E$, are \emph{capacitable}, i.e.
\begin{equation} \label{eq-choq-2}
     \cpvar(A,E)=\sup_{\substack{K \text{ compact} \\  K \subset A}} \cpvar(K,E)
      =\inf_{\substack{G \text{ relatively open} \\  A\subset G \subset E}} \cpvar(G,E).
\end{equation}
\end{thm}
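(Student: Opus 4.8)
The plan is to reduce the statement to the abstract Choquet capacitability theorem applied to the relatively open topology on $F$. First I would record that, by the discussion preceding the statement, $\cpvar(\,\cdot\,,E)$ restricted to subsets of $E$ is monotone (Theorem~\ref{thm-cp}\,\ref{cp-subset-sum} for $\cpvar$), satisfies the countable increasing limit property (Theorem~\ref{thm-cp}\,\ref{cp-Choq-E-sum} for $\cpvar$), and satisfies the decreasing-compact limit property $\cpvar(K,E)=\lim_j\cpvar(K_j,E)$ for decreasing compacts $K_j\subset E$, as noted just above the theorem. Hence $\cpvar(\,\cdot\,,E)$ is a Choquet capacity on $E$ in the sense of the three Choquet axioms, where the ambient topological space is taken to be $E$ with its subspace (relatively open) topology.

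Next I would fix a locally compact $F$ with $A\subset F\subset E$. The key point is that every relatively open (in $F$) subset of $F$ that we need arises, via \eqref{eq-def-cpvar}, from relatively open subsets of $E$: if $G$ is open in $F$ then $G=U\cap F$ for some $U$ open in $E$, and since $\cpvar$ is monotone it suffices to compare with $\cpvar(G',E)$ for $G'$ relatively open in $E$, $G'\supset A$. Thus I would restrict $\cpvar(\,\cdot\,,E)$ to subsets of $F$ and observe it is still a Choquet capacity, now on the locally compact Hausdorff space $F$, with ``open'' meaning open in $F$. The outer regularity needed to identify the right-hand infimum in \eqref{eq-choq-2} with the infimum over sets open in $F$ follows from \eqref{eq-cpvar} together with the monotonicity argument just sketched: an arbitrary relatively-open-in-$E$ set $G\supset A$ can be intersected with $F$ without decreasing capacity below $\cpvar(A,E)$, and conversely any open-in-$F$ neighbourhood of $A$ is $U\cap F$ with $U$ open in $E$, which is admissible in \eqref{eq-def-cpvar}; hence
\[
\inf_{\substack{G \text{ rel.\ open in }F\\ A\subset G\subset F}}\cpvar(G,E)
= \inf_{\substack{G \text{ rel.\ open in }E\\ A\subset G\subset E}}\cpvar(G,E)
= \cpvar(A,E).
\]

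With these preparations, I would invoke the classical Choquet capacitability theorem in its abstract formulation on a locally compact Hausdorff space (e.g.\ Theorem~10.1.1 in Aikawa--Ess\'en~\cite{AE}, Part~2, exactly as used in the proof of Theorem~\ref{thm-Choq-cap}), applied to the Choquet capacity $\cpvar(\,\cdot\,,E)|_F$ on $F$. This yields that every Suslin (in particular every Borel) subset $A$ of $F$ satisfies
\[
\cpvar(A,E)=\sup_{\substack{K\text{ compact}\\ K\subset A}}\cpvar(K,E),
\]
where compact subsets of $A$ are the same whether computed in $F$ or in $X$. Combining this with the outer-regularity identity above gives \eqref{eq-choq-2}. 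The main obstacle, and the only thing requiring care, is the bookkeeping in the previous paragraph: making sure that restricting the capacity from $E$ to the subspace $F$ does not disturb any of the three Choquet axioms (monotonicity and the decreasing-compact property are immediate; for the increasing-union property one uses that the union of an increasing sequence of subsets of $F$ stays in $F$) and that the relatively-open neighbourhoods used in \eqref{eq-def-cpvar} and those used in the abstract theorem genuinely match up, which is exactly the content of the displayed chain of equalities.
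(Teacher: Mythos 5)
Your proposal is correct and follows essentially the same route as the paper: restrict $\cpvar(\,\cdot\,,E)$ to the locally compact set $F$, note that the restriction is still a Choquet capacity, apply the abstract capacitability theorem (Theorem~10.1.1 in Aikawa--Ess\'en~\cite{AE}, Part~2) for the first equality in \eqref{eq-choq-2}, and observe that the second equality is just \eqref{eq-cpvar}. The extra bookkeeping comparing neighbourhoods open in $F$ with those open in $E$ is sound (via monotonicity) but not actually needed, since the infimum in \eqref{eq-choq-2} is taken over sets relatively open in $E$ and is therefore exactly the definitional outer regularity \eqref{eq-cpvar}.
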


Note that if $E$ is locally compact, in particular if $E$ is open or compact,
then \eqref{eq-choq-2} holds for all $A \subset E$ (provided that $p>1$).

\begin{proof}
Restrict  $\cpvar(\,\cdot\,,E)$ to subsets of $F$.
It is then clear that this restricted capacity is  a 
Choquet capacity on $F$.
We can now apply Choquet's capacitability theorem in
its usual abstract formulation (for which we need that $F$ is
locally compact), see e.g.\ Theorem~10.1.1 in Aikawa--Ess\'en~\cite{AE}, Part~2.
This gives the first equality in \eqref{eq-choq-2}, while
the second equality is just \eqref{eq-cpvar}.
\end{proof}

\begin{remark} \label{rmk-cp=cpvar}
It follows directly from the definition of $\cpvar$ that
  $\cpvar(A,E)=\cp(A,E)$  for relatively open subsets $A$
of $E$, but not for general subsets of $E$, see Examples~\ref{ex-arc} 
and~\ref{ex-bow-tie}.

If all functions in $\Np(X)$ are quasicontinuous, then 
$\cpvar(A,E)=\cp(A,E)$ if $A \subset \inter E$ and $p>1$, 
or $\dist(A, X\setm E)>0$ and $p=1$, by Theorem~\ref{thm-outercap-cp}
 and the fact that
$G\cap\interior E$ is open for every relatively open $G\subset E$,
\end{remark}

In fact, we have the following result which sheds some more light on
the equality $\cpvar=\cp$ and  the
question posed in~\eqref{eq-outercap-cp-rel}.
It depends on the zero \p-weak upper gradient property, which was
introduced in A.~Bj\"orn~\cite{ABcluster}, where it was also shown that 
it follows from the $(1,p)$-Poincar\'e inequality.

By definition, $X$ has the \emph{zero \p-weak
upper gradient property} if every measurable function $f$, 
which has zero as a \p-weak upper gradient in some ball $B(x,r)$, 
is  essentially constant in some (possibly smaller) ball  $B(x,\de)$, 
which can depend both on $f$ and $B(x,r)$.
(It is equivalent to require this for bounded measurable functions,
see Remark~5.8 in Bj\"orn--Bj\"orn~\cite{BBnonopen}.)

\begin{prop}   \label{prop-cp=cpvar}
Let $p>1$. 
Assume  that 
all functions in $\Np(X)$ are quasicontinuous and that $X$ 
has the zero \p-weak upper gradient property\/
\textup{(}both of which hold in particular if 
$X$ is complete and supports a\/ $(1,p)$-Poincar\'e inequality,
and  $\mu$ is doubling\/\textup{)}.
If $A\subset E$ then\/ $\cpvar(A,E)=\cp(A,E)$ or $\cpvar(A,E)=\infty$.
\end{prop}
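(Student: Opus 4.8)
The plan is to relate $\cpvar(A,E)$ to $\cp(\itoverline{A}\cap E,E)$, where the closure is taken in $X$, and use the outer regularity of $\cp$ together with a dichotomy coming from the zero \p-weak upper gradient property. The key observation is that for any relatively open $G\subset E$ with $A\subset G$, we have $\itoverline{A}\cap E\subset\itoverline{G}\cap E$; but more usefully, if $A\subset \interior E$ then $\cpvar(A,E)=\cp(A,E)$ already by Remark~\ref{rmk-cp=cpvar}, so the content is entirely about sets $A$ with $A\not\subset\interior E$, i.e.\ sets meeting $\bdry E$ (relative to $X$). So the dichotomy to establish is: either $\cp(A,E)<\infty$, in which case one shows $\cpvar(A,E)=\cp(A,E)$; or $\cp(A,E)=\infty$, and then one must show $\cpvar(A,E)=\infty$ as well (the reverse inequality $\cpvar\ge\cp$ always holds by monotonicity, Theorem~\ref{thm-cp}\,\ref{cp-subset-sum}, since $\cp(G,E)\ge\cp(A,E)$ for $A\subset G$).

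First I would handle the trivial direction: $\cpvar(A,E)\ge\cp(A,E)$ always, since every relatively open $G\supset A$ satisfies $\cp(G,E)\ge\cp(A,E)$. So the real work is the inequality $\cpvar(A,E)\le\cp(A,E)$ under the hypothesis $\cp(A,E)<\infty$. Assume $\cp(A,E)<\infty$ and pick $u\in\Np_0(E)$, $0\le u\le 1$, with $u\ge 1$ on $A$ and $\|g_u\|_{L^p(X)}^p<\cp(A,E)+\eps$. Since $u$ is quasicontinuous (extended by zero outside $E$), there is an open $V\subset X$ with $\Cp(V)$ small on which $u|_{X\setm V}$ is continuous; then $U:=\{x:u(x)>1-\eps\}$ is such that $U\setm V$ is relatively open in $X\setm V$ and contains $A\setm V$. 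The point where the zero \p-weak upper gradient property enters: I claim that if $A$ has a point $x_0\in\bdry E$ near which $u$ is forced to be constant (because $\Np_0(E)$-functions may have zero \p-weak upper gradient in a half-ball straddling $\bdry E$ — being zero outside $E$), then $u$ cannot jump to value $\ge 1$ at $x_0$, forcing $u$ to vanish there, which is incompatible with $\cp(A,E)<\infty$ unless the relevant boundary portion of $A$ has zero capacity. The cleanest route: show that if $\cp(A,E)<\infty$ then in fact $\itoverline{A}^{\,p}\cap E\subset\interior E$ up to a \p-finely-negligible set, or more simply, decompose $A$ into $A\cap\interior E$ (handled by Remark~\ref{rmk-cp=cpvar}) and $A\cap\bdry E$, and show that on the boundary part, finiteness of $\cp$ plus the zero \p-weak upper gradient property forces the boundary part to carry no capacity, reducing to the interior case.

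Concretely, I would argue: take $u$ admissible with finite energy. For each boundary point $x\in A\cap\bdry E$, by the zero \p-weak upper gradient property applied to $u$ on small balls $B(x,r)$ — wait, $u$ need not have zero \p-weak upper gradient there — instead apply it to the function $u\chi_E$ viewed near $x$: since $u=0$ on $X\setm E$ and $X\setm E$ is ``thick'' near such $x$ when $x\notin\interior E$, and... the technically delicate claim is that if a point $x\in\bdry E$ has positive capacity relative to $A$ and $u\ge1$ there, then $u$ cannot be extended by zero across $\bdry E$ with finite energy unless the Poincaré-type or zero-gradient obstruction is violated. I would follow A.~Bj\"orn~\cite{ABcluster} and the cluster-set techniques there: the zero \p-weak upper gradient property implies that a Newtonian function which is zero on a ``large'' set near $x$ and equals $1$ at $x$ in a finely-continuous (quasicontinuous) sense must have infinite energy in any neighbourhood of $x$, unless $\{x\}$ or the offending portion has $\cp$-capacity zero. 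Having established that $A\cap\bdry E$ is, for the purposes of $\cp(\cdot,E)$ with finite value, negligible or empty, I conclude $\cp(A,E)=\cp(A\cap\interior E,E)=\cpvar(A\cap\interior E,E)$ by Remark~\ref{rmk-cp=cpvar}, and then $\cpvar(A,E)\le\cpvar(A\cup\interior E\text{-thickening},E)$... finishing via the outer regularity~\eqref{eq-outercap-cp} of $\cp$ to produce the needed relatively open $G$.

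\textbf{Main obstacle.} The hard part will be the precise argument that finiteness of $\cp(A,E)$ together with the zero \p-weak upper gradient property forces $A$ to be (capacity-)contained in $\interior E$, i.e.\ ruling out ``boundary-reaching'' sets $A$ of finite capacity of the bow-tie type in Example~\ref{ex-bow-tie}. This requires carefully combining: (i) quasicontinuity, so that the admissible $u$ has a genuine pointwise representative on the complement of a small open set; (ii) the fact that $u=0$ on $X\setm E$ q.e.; and (iii) the zero \p-weak upper gradient property, to derive that near any point of $A\cap\bdry E$ the function $u$ is essentially constant in small balls — but it is essentially $0$ there (by contact with $X\setm E$) while $u\ge 1$ on $A$, a contradiction unless that point is in a $\cp$-null set. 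Managing the interplay of ``essentially constant'', ``q.e.\ zero'', and ``$\ge 1$ on $A$'' at boundary points — and ensuring the exceptional sets match up — is where the delicacy lies; everything else is bookkeeping with monotonicity and the already-proven outer regularity of $\cp$.
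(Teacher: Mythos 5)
There is a genuine gap, and it is in the overall logical architecture rather than in a technical detail. You read the statement as the implication ``$\cp(A,E)<\infty \Rightarrow \cpvar(A,E)=\cp(A,E)$'' (with the case $\cp(A,E)=\infty$ handled by the trivial inequality $\cpvar\ge\cp$). That implication is false: Examples~\ref{ex-arc} and~\ref{ex-bow-tie} in the paper exhibit sets with $\cp(A,E)=0<\infty$ while $\cpvar(A,E)=\infty$, and the remark immediately after the proposition points to exactly these situations. Consequently the ``main obstacle'' you identify --- proving that finiteness of $\cp(A,E)$ forces $A$ to be capacity-contained in $\interior E$, thereby ruling out bow-tie-type configurations --- is not merely hard, it is impossible; in Example~\ref{ex-arc} the offending point $(0,0)$ \emph{is} a $\Cp$-null singleton, yet $\cpvar(A,E)=\infty$ all the same, because every relatively open $G\ni(0,0)$ in $E$ contains a segment of the real axis on which no admissible function can equal $1$. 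Removing a capacity-null boundary portion from $A$ does not help you, since $\cpvar$ is computed over relatively open sets that must still cover that portion.

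The disjunction in the statement has to be resolved by a different dichotomy, on the local structure of $A$ relative to $E$ rather than on the value of $\cp(A,E)$. The paper's proof (assuming $\cp(A,E)<\infty$) splits according to whether there exists $x\in A$ such that $\Cp(B(x,r)\setm E)>0$ \emph{and} $\Cp(B(x,r)\cap E)>0$ for every $r>0$. If such an $x$ exists, then for any open $G\supset A$ and any $u\in\Np_0(E)$ with $u=1$ on $G\cap E$, the function $u$ equals $\chi_{B\cap E}$ on a ball $B\subset G$ centred at $x$, so $g_u=0$ a.e.\ in $B$; the zero \p-weak upper gradient property then makes $u$ essentially constant near $x$, contradicting that both level sets $\{u=0\}$ and $\{u=1\}$ have positive capacity in every ball around $x$. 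Hence no admissible $u$ exists and $\cpvar(A,E)=\infty$ --- this is where the zero-gradient hypothesis is actually used, namely to certify the \emph{second} alternative, not to exclude it. In the complementary case every $x\in A$ has a ball $B_x$ with $\Cp(B_x\setm E)=0$ or $\Cp(B_x\cap E)=0$; a Lindel\"of covering argument splits these into unions $G'$ and $G''$, one passes to $E\cup G'$ (where $A\cap G'$ lies in the interior, so Remark~\ref{rmk-cp=cpvar} applies) and modifies the admissible function on the $\Cp$-null set $G''\cap E$ to make it $1$ on a relatively open set containing $A$. Your proposal contains none of this case analysis, and the quasicontinuity/cluster-set machinery you sketch cannot substitute for it.
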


For situations when $\cp(A,E)<\infty=\cpvar(A,E)$ see Examples~\ref{ex-arc}
and~\ref{ex-bow-tie}.
In both examples we have $\cp(A,E)=0$ but by adding 
an open set $V\subset E$ with $\cp(V,E)<\infty$ to $A$ we get
\[
0<\cp(A\cup V,E)<\infty=\cpvar(A\cup V,E).
\]

Note that by the proof below we see that
$\cp(A,E)=\cpvar(A,E)$ in case~2, while $\cpvar(A,E)=\infty$ in case~1.

\begin{proof}
It is clear that $\cpvar(A,E)\ge\cp(A,E)$ for all $A\subset E$.
To prove the converse inequality, assume that $\cp(A,E)<\infty$.
We shall distinguish two cases:

\emph{Case}~1. 
\emph{There exists $x\in A$ such that for all $r>0$ both $\Cp(B(x,r)\setm E)>0$
and $\Cp(B(x,r)\cap E)>0$.}
We shall show that in this case, $\cpvar(A,E)=\infty$.
Let $G\subset X$ be an arbitrary open set containing $A$
and find a ball $B=B(x,r)\subset G$. 
Assume that $u\in\Np_0(E)$ is such that $u=1$ in $G\cap E$.
Then $u\in\Np(B)$, $u=0$ in $B\setm E$ and $u=1$ in $B\cap E$.
In particular, $g_u=0$ a.e.\ in $B$.
The zero \p-weak upper gradient property implies that $u$ is 
essentially (and thus q.e.) constant in some smaller ball $B(x,\de)$.
This contradicts the choice of $x$ and $u$ 
and hence there are no $u\in\Np_0(E)$ admissible 
in the definition of $\cp(G\cap E,E)$, i.e.\  $\cp(G\cap E,E)=\infty$.
Since $G\supset A$ was arbitrary, we conclude that $\cpvar(A,E)=\infty$.

\emph{Case}~2. 
\emph{For every $x\in A$ there exists a ball $B_x\ni x$ such that 
$\Cp(B_x\setm E)=0$ or $\Cp(B_x\cap E)=0$.}
As $X$ is separable, the Lindel\"of property,
see Proposition~1.6 in Bj\"orn--Bj\"orn~\cite{BBbook},
 implies that $A$ can be covered 
by countably many of these balls, i.e.\ $A\subset\bigcup_{i=1}^\infty B_{x_i}$. 
Let $G'$ be the union of the balls $B_{x_i}$ for which $\Cp(B_{x_i}\setm E)=0$,
and $G''$ be the union of the remaining balls $B_{x_i}$ in the countable subcover.
Then $\Cp(G'\setm E)=0$ and $\Cp(G''\cap E)=0$.

As $A\cap G'\subset \interior(E\cup G')$, we have by 
Theorem~\ref{thm-cp}\,\ref{cp-subset-sum} and~\ref{cp-subset-sum-2},
Remark~\ref{rmk-cp=cpvar} (it is here we use that $p>1$)
and the definition of $\cpvar$ that
\begin{align}  \label{eq-cp-cpvar-A-G'}
\cp(A,E) &\ge \cp(A\cap G',E\cup G') = \cpvar(A\cap G',E\cup G') \nonumber\\
  &= \inf_{\substack{G \text{ open} \\  A\cap G'\subset G}} 
                     \cp(G\cap(E\cup G'),E\cup G').
\end{align}
Let $G\subset G'\cup G''$ be an open set in $X$ containing $A\cap G'$.
We shall show that
\begin{equation}  \label{eq-to show-EcupG'}
\cp(G\cap E,E) \le \cp(G\cap(E\cup G'),E\cup G').
\end{equation}
Together with \eqref{eq-cp-cpvar-A-G'} this then yields
\[
\cp(A,E) \ge \inf_{\substack{G \text{ open} \\  A\cap G'\subset G}} \cp(G\cap E,E)
     = \cpvar(A\cap G',E).
\]
Thus, for every $\eps>0$ there exists an open set $G\supset A\cap G'$ and 
$u\in\Np_0(E)$ such that $u\ge1$ in $G\cap E$ and 
\[
\int_X g_u^p\,d\mu \le \cp(A,E)+\eps.
\]
Since $\Cp(G''\cap E)=0$, we can modify $u$ on $G''\cap E$ to get
$u=1$ on the relatively open set $(G\cap E)\cup(G''\cap E)\supset A$.
Thus, $\cpvar(A,E)\le \cp(A,E)+\eps$ and letting $\eps\to0$ will prove the
proposition.

It remains to show~\eqref{eq-to show-EcupG'}.
Let $u\in\Np_0(E\cup G')$ be such that $u=1$ in $G\cap(E\cup G')$.
Since $\Cp(G'\setm E)=0$, we see that $u\in\Np_0(E)$ and is thus
admissible in the definition of $\cp(G\cap E,E)$.
Taking infimum over all such $u$ proves~\eqref{eq-to show-EcupG'} and
finishes the proof.
\end{proof}

The variational capacity $\cp(A,E)$ depends very much on the
underlying metric space $X$, even though we have refrained from making
this dependence explicit in the notation.
Let us however define $\cp(A,E;X):=\cp(A,E)$ and see how
changing $X$ can be of use.

If $A\subset E\subset X_1\subset X_2$ then $\Np(X_2)\subset\Np(X_1)$ and
we immediately obtain that $\cp(A,E;X_1)\le\cp(A,E;X_2)$.
The inequality can be strict and in particular it can happen that
$\cp(A,E;X_1)=0<\cp(A,E;X_2)$, even for open $E$.
Since the definition of $\Np(X)$ depends on curves in $X$, the capacity 
$\cp$ is influenced by the path-connectedness properties of the 
underlying space. 
In Bj\"orn--Bj\"orn--Shanmugalingam~\cite{BBS-Dir}, similar phenomena
for Sobolev capacities are used to obtain new resolutivity results
for the Dirichlet problem for \p-harmonic functions.
We refer the reader to the examples therein.

In the following example we briefly comment on some other properties of
$\cp$ with respect to different underlying spaces, as well as on
the influence of the underlying space on the minimizers 
in the definition of $\cp$.

\begin{example}   \label{ex-vary-X}
Let for instance $X$ be an open set $G\subset\R^n$, equipped with
the induced metric and measure,
where $\R^n$ may be unweighted or weighted using a \p-admissible weight.
Let further, for simplicity, $K \subset \Om \subset G$, 
where $K$ is compact and $\Om$ is open and bounded.
If $\bdy_{\R^n} \Om \subset G$, then
it is fairly easy to see that 
$\cp(K,\Om;G)=\cp(K,\Om;\R^n)$.
On the other hand, when $\bdy_{\R^n} \Om \setm G$ is substantial,
the situation becomes different, as we shall now see. 

Usually, when calculating the variational capacity one more or less
solves a Dirichlet problem with zero boundary values on $\bdy \Om$
and boundary values $1$ on~$K$.
When regarding $\cp(K,\Om;G)$ 
as a problem in $\R^n$, it can be seen that it corresponds
to a mixed boundary value problem of the following type:
zero boundary values on $\bdy_{G} \Om$, boundary values $1$ on $K$,
and zero Neumann boundary condition
on $\bdy_{\R^n} \Om \setm \bdy_{G} \Om$, provided that
$\Om$ is smooth enough as a subset of $\R^n$.
If it is less smooth, then the same is true in a generalized sense,
making it possible to study problems with 
zero Neumann boundary condition in very general situations.
See e.g.\  the discussion in 
Section~1.7 and Example~8.18 in Bj\"orn--Bj\"orn~\cite{BBbook}.
Since $X=G$ is not complete (unless $G=\R^n$) this
gives a further motivation for studying nonlinear potential theory on
noncomplete spaces.

In this situation one may also consider  
$X=\itoverline{G}$ as the underlying metric space, 
where the closure is taken with respect to $\R^n$. 
The above discussion is more or less  the same for $G$ and $\itoverline{G}$ 
(but more care has to be taken in the formulations
near the boundary $\bdy_{\R^n} G$).

An advantage of $\itoverline{G}$ is that it is complete. 
At the same time, both $G$ and $\itoverline{G}$ 
may fail to support a Poincar\'e inequality,
and the measure may fail to  be doubling on $G$ or $\itoverline{G}$.
We can still of course use the properties in Theorem~\ref{thm-cp},
since they hold in full generality.
Also Theorem~\ref{thm-Choquet-cpvar} holds on $G$ and $\itoverline {G}$.

But for $X=\itoverline{G}$,
Theorems~\ref{thm-outercap-cp}, \ref{thm-Ki}, \ref{thm-Choq-cap}
and Corollary~\ref{cor-altdef} are not available in general.
On the other hand, by Theorem~\ref{thm-quasicont} (applied with $\R^n$ and $G$ 
in place of $X$ and $\Om$), we have
all of Theorems~\ref{thm-outercap-cp}, \ref{thm-Ki}, \ref{thm-Choq-cap}
and Corollary~\ref{cor-altdef} available for $X=G$.
If $G$ moreover has the zero \p-weak upper gradient property, then also
Proposition~\ref{prop-cp=cpvar} is available for $X=G$.
\end{example}

\end{document}